\newtheorem{theorem}{Theorem}[section]
\newtheorem{proposition}[theorem]{Proposition}
\newtheorem{lemma}[theorem]{Lemma}
\theoremstyle{definition} 
\newtheorem{defn}[theorem]{Definition}
\newtheorem{definition}[theorem]{Definition}
\newtheorem{remark}[theorem]{Remark}
\newcommand{\cC}{\mathcal{C}} %\C for complex
\newcommand{\cP}{\mathcal{P}}
\newcommand{\qu}{/\kern-.7ex/}
\newcommand{\lqu}{\backslash \kern-.7ex \backslash}%left
\newcommand{\on}{\operatorname}
\newcommand{\Pic}{\on{Pic}}
\title{$K$-theoretic quasimap invariants and their wall-crossing}
\author{Hsian-Hua Tseng}
\address{Department of Mathematics\\ Ohio State University\\ 100 Math Tower, 231 West 18th Ave.\\Columbus\\ OH 43210\\ USA}
\email{hhtseng@math.ohio-state.edu}
\author{Fenglong You}
\address{Department of Mathematics\\ Ohio State University\\ 100 Math Tower, 231 West 18th Ave.\\Columbus\\ OH 43210\\ USA}
\email{you.111@osu.edu}
\keywords{}
\begin{document}
\date{\today}

\begin{abstract} 
For each positive rational number $\epsilon$, we define $K$-theoretic $\epsilon$-stable quasimaps to certain GIT quotients $W\sslash G$. For $\epsilon>1$, this recovers the $K$-theoretic Gromov-Witten theory of $W\sslash G$ introduced in more general context by Givental and Y.-P. Lee. 

For arbitrary $\epsilon_1$ and $\epsilon_2$ in different stability chambers, these $K$-theoretic quasimap invariants are expected to be related by wall-crossing formulas. We prove wall-crossing formulas for genus zero $K$-theoretic quasimap theory when the target $W\sslash G$  admits a torus action with isolated fixed points and isolated one-dimensional orbits.
\end{abstract}

\maketitle 

\tableofcontents

\section{Introduction}

For an affine algebraic variety $W=\on{Spec}(A)$ that admits an action by a reductive algebraic group $G$, a choice of the polarization $\mathcal O(\theta)$ determines a GIT quotient $W\sslash_\theta G$. In \cite{CKM} and \cite{CK}, the authors define the moduli space $$Q^\epsilon_{g,k}(W\sslash G,d)$$ parametrizing maps of class $d$ from genus $g$ nodal curves with $k$-marked points to the quotient stack $[W\slash G]$ with $\epsilon$-stability. Assuming $W$ has only lci singularity, the canonical obstruction theory of $Q^\epsilon_{g,k}(W\sslash G,d)$ is perfect and hence yields a virtual fundamental class, see \cite{CKM} and \cite{CK}. 
When $W\sslash G$ is projective the cohomological quasimap invairants are defined in \cite{CKM} and \cite{CK}, using evaluation maps and descendant classes $\psi$ at the markings.

By \cite[Section 2.3]{Lee}, the perfect obstruction theory yields a {\em virtual structure sheaf} $$\mathcal O^{\text{vir}}_{Q^\epsilon_{g,k}(W\sslash G,d)}$$ in the $K$-theory of $Q^\epsilon_{g,k}(W\sslash G,d)$. $K$-theoretic (descendant) $\epsilon$-quasimap invariants of $W\sslash G$ are defined to be holomorphic Euler characteristics of vector bundles on the moduli space $Q^\epsilon_{g,k}(W\sslash G,d)$:
\begin{align*}
&\langle \gamma_1 L_1^{a_1},\ldots,\gamma_kL_k^{a_k}\rangle_{g,k,d}^{W\sslash G, \epsilon}\\
:=
&\chi
\left(
Q^\epsilon_{g,k}(W\sslash G,d), \mathcal O^{\text{vir}}_{Q^\epsilon_{g,k}(W\sslash G,d)}\otimes (\otimes_{i=1}^kL_i^{\otimes a_i}\on{ev}_i^*(\gamma_i))
\right)
\end{align*}
where $a_i$ are nonnegative integers, $\gamma_i\in K^0(W\sslash G)$ and $L_i$ are tautological line bundles over $Q^\epsilon_{g,k}(W\sslash G,d)$ corresponding to the $i$-th marked points.

As explained in \cite{CKM} and \cite{CK}, for each fixed class $d$, the set of positive rational numbers can be divided into chambers by finitely many walls $1,\frac{1}{2},\cdots,\frac{1}{d(L_\theta)}$, such that the moduli space $Q^\epsilon_{g,k}(W\sslash G,d)$ stays constant when $\epsilon$ is changing within a chamber, where $d(L_\theta)$ may be considered as the degree of the map with respect to the polarization $\mathcal O(\theta)$. We write $\epsilon=0+$ for $\epsilon$ being sufficiently small and being in the first chamber $(0,\frac{1}{d(L_\theta)}]$, for all $d$. Changes of quasimap invariants as $\epsilon$ varies,  termed {\em wall-crossing formulas}, is proved in \cite{CK} for genus $0$ equivariant cohomological theory. 

The goal of this paper is to study wall-crossing behavior for $K$-theoretic genus $0$ quasimap theory. 

We will consider permutation equivariant version of quantum $K$-theory, which takes into account the $S_n$-action on the moduli space by permuting the marked points, developed by Givental \cite{Givental}. This permutation-equivariant theory works better in our context.

As in \cite{CK}, genus zero wall-crossing formulas are naturally stated via generating functions of $K$-theoretic quasimap invariants. Let $\{\phi_\alpha\}$ be a basis of $K^0(W\sslash G)\otimes \mathbb Q$, $\{\phi^\alpha\}$ be the dual basis and $t=\sum_\alpha t^\alpha \phi_\alpha\in K^0(W\sslash G)\otimes \mathbb Q$. Let $\Lambda$ be the $\lambda$-algebra described in Section \ref{section-permut-equiv} and $\gamma\in K^0(W\sslash G,\Lambda)$. For $\epsilon \geq 0+$, we define the $S$-operator:
\[
(S^\epsilon)(q)(\gamma)=\gamma+\sum\limits_\alpha\left(\sum\limits_{(n,d)\neq (0,0)}Q^d\langle \frac{\phi^\alpha}{1-qL},\gamma,t,\ldots,t\rangle^{\epsilon,S_n}_{0,n+2,d}\right)\phi_\alpha,
\]
where $q$ is a formal variable. 

Suppose that $W\sslash G$ admits a torus $T$ action with  isolated fixed points and isolated one-dimensional orbits. The main result of this paper, Theorem \ref{main-theorem-1}, is a wall-crossing formula which relates $S$-operators for $\epsilon_1$ and $\epsilon_2$ in different stability chambers and the invertible classes $\gamma$ of the form $\gamma=\mathds 1+O(Q)$.

For each $\epsilon\geq 0+$, we also defined the permutation-equivariant $\mathcal J^\epsilon$-function and prove the following identity
\[
\mathcal J^\epsilon(q)=S^\epsilon(q)(P^\epsilon)
\]
where $P^\epsilon$ is a generating series on the quasimap graph space, see Proposition \ref{prop:SP}. Theorem \ref{main-theorem-2}  below shows, for each $\epsilon\geq 0+$, the permutation-equivariant $\mathcal J^\epsilon$-function lies in the Lagrangian cone $\mathcal L_{S_{\infty},W\sslash G}$ of the permutation-equivariant $K$-theoretic Gromov-Witten theory of $W\sslash G$. 

Theorem \ref{main-theorem-1} and Theorem \ref{main-theorem-2} generalize the main theorems of \cite{CK} to $K$-theory when the torus action on $W\sslash G$ has only isolated fixed points and isolated one-dimensional orbits. They can also be considered as generalizations of the $K$-theoretic mirror theorem for toric manifolds due to Givental \cite{Givental}.

%%%%%%%%%%%%%
\subsection{Acknowledgments}
H.-H. T. is supported in part by Simons Foundation Collaboration Grant and NSF grant DMS-1506551.

%%%%%%%%%%%%%%%%
\section{Constructions}

\subsection{$K$-theoretic Quasimap Invariants}
In this section, we review some basic definitions in quasimap theory following \cite{CKM, CK}. We then define the $K$-theoretic version of quasimap invariants.

Consider an affine algebraic variety $W=\on{Spec} (A)$ with $G$-action, where $G$ is a reductive algebraic group. An element $\xi$ of the character group $\chi(G)$, determines a one-dimensional $G$-representation $\mathbb C_\xi$, and hence an element 
\[
L_{\xi}=W\times \mathbb C_{\xi}
\]
of the group $\Pic_G(W)$ of isomorphism classes of $G$-linearized line bundles on $W$.

Fixing a character $\theta\in \chi(G)$, we obtain the GIT quotient $W\sslash G:=W\sslash _\theta G$, which is a quasiprojective variety and the morphism 
\[
W\sslash G\rightarrow W\slash _{\text {aff}}G:=\on{Spec}(A^G)
\]
is a projective morphism.

We write $W^s=W^{s}(\theta)$ (respectively $W^{ss}=W^{ss}(\theta)$) for the stable (respectively semistable) determined by $\theta$. Following \cite{CKM} and \cite{CK}, we require the following assumptions for the rest of the paper:
\begin{itemize}
\item $W^s=W^{ss}\neq \emptyset$;
\item $W^s$ is nonsingular;
\item $G$ acts freely on $W^s$.
\end{itemize}

Let $(C,x_1,\ldots,x_k)$ be a prestable $k$-pointed curve, a map
\[
[u]:C\rightarrow [W/G]
\]
is represented by a pair $(P,u)$, where $P\rightarrow C$ is a principal $G$-bundle over $C$ and 
\[
u:C\rightarrow P\times _G W
\]
is a section of the bundle $\rho: P\times_G W\rightarrow C$.

The numerical class $d$ of a map $(P,u)$ is the group homomorphism
\[
d: \Pic_G(W)\rightarrow \mathbb Z, \quad L\mapsto \deg_C(u^*(P\times_G L))
\]
\begin{defn}[\cite{CK}, Definition 2.4.1]
A quasimap to $W\sslash G$ is a map from $((C,x_1,\ldots,x_k),P,u)$ to the quotient stack $[W/G]$ such that generic points of $C$ land on the stable locus of $W$, i.e. for a generic point $p$ of $C$, we have $u(p)\in P\times _G W^s$.
\end{defn}
\begin{remark}
Points on $C$ which map to the unstable locus of $W$ are called base points, hence a quasimap has at most finitely many base points.
\end{remark}

\begin{defn}[\cite{CK}, Definition 2.4.2]
A group homomorphism $d: \Pic_G(W)\rightarrow \mathbb Z$ is called $L_\theta$-effective if it is a finite sum of classes of quasimaps, we write $\on{Eff}(W,G,\theta)$ for the semigroup of $L_\theta$-effective classes.
\end{defn}

Let $\on{Qmap}_{g,k}(W\sslash G,d)$ be the moduli space of genus $g$, $k$-pointed quasimaps of class $d$ to $W\sslash G$.

\begin{defn}[\cite{CK}, Definition 2.4.4]
We say that a quasimap is prestable if its base points are away from the nodes and marked points of the underlying curve. 
\end{defn}

\begin{defn}[\cite{CK}, Definition 2.4.5]
Given a prestable quasimap 
\[
((C,x_1,\ldots,x_k),P,u)
\] 
to $W\sslash G$, the length $l(x)$ at a point $x\in C$ is the contact order of $u(C)$ with the unstable subscheme $P\times _G W^{us}$ at $u(x)$. More precisely, 
\[
l(x):=\text{length}_x(\on{coker}(u^*\mathcal J \rightarrow \mathcal O_C)),
\]
 where $\mathcal J$ is the ideal sheaf of the closed subscheme $P\times _G W^{us}$ of $P\times _G W$.
\end{defn}
\begin{defn}[\cite{CK}, Definition 2.4.6]
Given a positive rational number $\epsilon$, a quasimap 
\[
((C,x_1,\ldots,x_k),P,u)
\] 
to $W\sslash G$ is called $\epsilon$-stable if it is prestable and satisfies the following conditions
\begin{itemize}
\item $\omega_C(\sum_{i=1}^k x_i)\otimes \mathcal L_\theta^\epsilon$ is ample, where $\mathcal L_\theta=P\times _G \mathbb C_\theta=u^*(P\times_G L_\theta)$;
\item $\epsilon l(x)\leq 1$ for every point $x \in C$.
\end{itemize}
\end{defn}
The moduli space of $\epsilon$-stable quasimaps $Q^\epsilon_{g,k}(W\sslash G,d)$ is an open substack of $\on{Qmap}_{g,k}(W\sslash G,d)$. The universal family 
\[
((\cC^\epsilon, x_1,\ldots,x_k),\cP,u)
\]
over $Q^\epsilon_{g,k}(W\sslash G,d)$ is obtained as follows. Let $\mathfrak M_{g,k}$ be the moduli space of prestable curves and $\mathfrak{Bun}_G$ be the moduli stack of principal $G$-bundles on the fibers of the universal curve $\pi: \mathfrak C_{g,k}\rightarrow \mathfrak M_{g,k}$. The universal curve 
\[
\pi: \cC^\epsilon \rightarrow Q^\epsilon_{g,k}(W\sslash G,d)
\]
is the pull-back of $\mathfrak C_{g,k}$ via the natural forgetful morphism
\[
\mu: Q^\epsilon_{g,k}(W\sslash G,d)\rightarrow \mathfrak{M}_{g,k}.
\]
 $\cP$ is the pull-back of the universal curve over $\mathfrak{Bun}_G$ via the natural forgetful morphism
\[
\nu: Q^\epsilon_{g,k}(W\sslash G,d)\rightarrow \mathfrak{Bun}_G,
\]
and $u$ is a section of the bundle
\[
\rho: \cP\times_G W\rightarrow \cC^\epsilon
\]

We write $\mathbb R T_\rho$ for the relative tangent complex of $\rho$. The canonical obstruction theory of $Q^\epsilon_{g,k}(W\sslash G,d)$ relative to the smooth Artin stack $\mathfrak{Bun}_G$ is given by the complex
\begin{equation}\label{Obst-theory}
(R^\bullet \pi_*(u^*\mathbb R T_\rho))^\vee
\end{equation}

\begin{theorem}[\cite{CKM}, Theorem 7.1.6 \& \cite{CK}, Theorem 2.5.1]
If $W$ has only lci singularities, then the obstruction theory \eqref{Obst-theory} is perfect.
\end{theorem}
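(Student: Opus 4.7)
The plan is to leverage the lci hypothesis on $W$ to produce a two-term locally free model for $\mathbb R T_\rho$, then carefully bound the amplitude of $R\pi_*$ by exploiting the fact that singularities of $W$ appear only in the unstable locus.

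First I would identify $\mathbb R T_\rho$, up to descent along $\cP \to \cC^\epsilon$, with the $G$-equivariant tangent complex $\mathbb T_W$. Since $W$ is lci, each point of $W$ has a $G$-invariant affine neighborhood in which $W$ sits as the zero locus of a regular sequence inside a smooth $G$-scheme $Y$; the resulting conormal exact sequence presents $\mathbb T_W$ locally as the two-term complex $[T_Y|_W \to N_{W/Y}]$ of locally free sheaves in degrees $[0,1]$. Standard descent for perfect complexes on the quotient stack $[W/G]$ glues these local models into a globally perfect complex of amplitude $[0,1]$, and pulling back through $u$ yields $u^*\mathbb R T_\rho$ as a perfect complex on $\cC^\epsilon$ of amplitude $[0,1]$, say quasi-isomorphic to a two-term complex $[E^0 \to E^1]$ of locally free sheaves of finite rank.

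Next I would control $R^\bullet\pi_*$. Because $\pi$ is proper and flat of relative dimension one, $R\pi_*$ preserves perfectness, and $R^i\pi_* = 0$ for $i \geq 2$ on coherent sheaves. The crucial input is that the cohomology sheaf $\mathcal H^1(u^*\mathbb R T_\rho)$ is supported exactly at the points of $\cC^\epsilon$ where $u$ meets $\cP\times_G W^{us}$, that is, at the base points of the quasimap: by hypothesis $W^s$ is smooth, hence $W^{\mathrm{sing}} \subset W^{us}$, and by prestability these base points form a closed subscheme of $\cC^\epsilon$ that is finite over $Q^\epsilon_{g,k}(W\sslash G, d)$. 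Consequently $R^1\pi_*$ vanishes on any sheaf supported there. Applying $R\pi_*$ to the truncation triangle
\[
\mathcal H^0(u^*\mathbb R T_\rho) \to u^*\mathbb R T_\rho \to \mathcal H^1(u^*\mathbb R T_\rho)[-1] \xrightarrow{+1}
\]
then confines $R\pi_*(u^*\mathbb R T_\rho)$ to degrees $[0,1]$, so that its dual is a perfect complex of amplitude $[-1,0]$, as required for a perfect obstruction theory.

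The step I expect to be most delicate is the support analysis of $\mathcal H^1(u^*\mathbb R T_\rho)$ together with the associated vanishing of $R^1\pi_*$ on it. Concretely, one must check that the differential $T_Y|_W \to N_{W/Y}$, pulled back along $u$, is injective on stalks exactly at points whose image lies in $W^s$ (by the Jacobian criterion applied to the regular sequence cutting out $W$), and that its cokernel is a torsion sheaf of finite length in each fiber of $\pi$ (using the isolated nature of base points away from nodes and markings). A secondary technical point is the $G$-equivariant gluing of the local lci presentations into a single global perfect complex on $[W/G]$; this should follow from standard cohomological descent, but needs to be spelled out to ensure the obstruction-theory morphism is canonical.
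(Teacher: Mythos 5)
Your argument is correct and is essentially the proof given in the cited references (the paper itself only quotes this theorem from \cite{CKM} and \cite{CK} without proof): the lci hypothesis yields a two-term locally free model for the relative tangent complex, smoothness of $W^s$ confines $\mathcal H^1(u^*\mathbb R T_\rho)$ to the base locus, and finiteness of the base locus over the moduli space (guaranteed by prestability) kills the higher direct image of that part, giving amplitude $[0,1]$ before dualizing. One small correction to your ``delicate step'': what must be checked at points mapping into $W^s$ is \emph{surjectivity} of the differential $T_Y|_W\to N_{W/Y}$ (so that its cokernel, which is $\mathcal H^1$, vanishes there), not injectivity, which only controls $\mathcal H^0$.
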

Hence, when $W$ has lci singularities, by \cite[Section 2.3]{Lee}, there is a virtual structure sheaf $\mathcal O^{\text{vir}}_{Q^\epsilon_{g,k}(W\sslash G,d)}$, which is an element of $K(Q^\epsilon_{g,k}(W\sslash G,d))$, the Grothendieck group of coherent sheaves on $Q^\epsilon_{g,k}(W\sslash G,d)$.

Since base points are away from the marking, we have the evaluation maps
\[
\on{ev}_i: Q^\epsilon_{g,k}(W\sslash G,d)\rightarrow W\sslash G, \quad i=1,\ldots,k.
\]
We assume that $W\sslash G$ is projective. Let $L_i$ be the $i$-th tautological cotangent line bundle over $Q^\epsilon_{g,k}(W\sslash G,d)$.

\begin{definition}
Given $\gamma_i\in K^0(W\sslash G)\otimes \mathbb Q$ and nonnegative integers $a_i$, $1\leq i \leq k$, we define the $K$-theoretic descendant $\epsilon$-stable quasimap invariant of $W\sslash G$:
\begin{align}\label{quasimap-invariant}
\langle \gamma_1 L_1^{a_1},\ldots,\gamma_kL_k^{a_k}\rangle_{g,k,d}^{W\sslash G, \epsilon}:=
\chi
\left(
Q^\epsilon_{g,k}(W\sslash G,d), \mathcal O^{\text{vir}}_{Q^\epsilon_{g,k}(W\sslash G,d)}\otimes (\otimes_{i=1}^kL_i^{\otimes a_i}\on{ev}_i^*(\gamma_i))
\right)
\end{align}
\end{definition}

\begin{remark}[see \cite{CK}, Remark 2.4.8]
\hfill
\begin{enumerate}
\item
For $\epsilon>1$, assume $(g,k)\neq (0,0)$, we have 
\[
Q^{\epsilon}_{g,k}(W\sslash G,d)=\overline{M}_{g,k}(W\sslash G,d)
\]
and the corresponding $K$-theoretic $\epsilon$-stable quasimap invariant is a $K$-theoretic Gromov-Witten invariant as defined by A. Givental and Y.-P. Lee \cite{GL} and \cite{Lee}

\item

Assume that $(g,k)\neq (0,0),(0,1)$ and fixed the numerial data $(g,k,d)$, for each integer $1\leq e\leq d(L_\theta)-1$, the moduli space $Q^{\epsilon}_{g,k}(W\sslash G,d)$ of $\epsilon$-stable quasimaps to $W\sslash G$ stays constant when $1/(e+1)<\epsilon \leq 1/e$. Therefore, for each fixed $d$, the set of positive rational number is divided into chambers by finitely many walls $1,\frac12,\cdots, \frac{1}{d(L_\theta)}$.
\end{enumerate}
\end{remark}

As explained in \cite{CK}, the quasimap theory applies to a large class of targets, including toric and flag varieties, zero loci of sections of homogeneous bundles on toric and flag varieties, local targets with base a GIT quotient, Nakajima quiver varieties etc.

%%%%%%%%%%%%%%%%%%%%%%%%%%%%%%%%%
\subsection{Quasimap Graph Spaces}
We write $QG^\epsilon_{g,k}(W\sslash G,d)$ for $\epsilon$-stable quasimap graph spaces. The data
\[
((C,x_1,\ldots,x_k),P,u,\varphi)
\]
represents the maps
\[
C\longrightarrow [W/G]\times \mathbb P^1
\]
of class $(d,1)$ and the morphism $\varphi$ maps from $C$ to $\mathbb P^1$ with degree $1$. Namely, there is an irreducible component $C_0$ of $C$ such that the restriction $\varphi|_{C_0}\rightarrow \mathbb P^1$ is an isomorphism and the remaining components $C\backslash C_0$ are contracted by $\varphi$. Elements of $QG^\epsilon_{g,k}(W\sslash G,d)$ is given by the prestable quasimaps $((C,x_1,\ldots,x_k),P,u,\varphi)$ with the stability conditions:
\begin{itemize}
\item $\omega_{\overline{C\backslash C_0}}(\sum x_i+\sum y_j)\otimes \mathcal L_\theta^\epsilon$ is ample, where $x_i$ are marked points on $\overline{C\backslash C_0}$ and $y_j$ are the nodes $\overline{C\backslash C_0}\cap C_0$;
\item the inequality $\epsilon l(x)\leq 1$ holds for every point $x$ on $C$. 
\end{itemize}
The $K$-theoretic $\epsilon$-stable quasimap invariants for graph spaces can be defined the same way as \eqref{quasimap-invariant}.
%%%%%%%%%%%%%%%%%

\subsection{Permutation Equivariant Quantum $K$-theory}\label{section-permut-equiv}

In this section we review the permutation equivariant $K$-theoretic Gromov-Witten theory developed by Givental in \cite{Givental}. 

By a $\lambda$-algebra $\Lambda$, we mean an algebra over $\mathbb Q$ equipped with abstract Adams operations $\Psi^k$, $k=1,2,\ldots, $ that is,
ring homomorphisms $\Lambda \rightarrow \Lambda$ that satisfy $\Psi^r\Psi^s=\Psi^{rs}$ and $\Psi^1=\text{id}$.
We assume that the $\lambda$-algebra $\Lambda$ includes the Novikov variables, the algebra of symmetric polynomials in a given number of variables and the torus equivariant $K$-ring of a point. We also assume $\Lambda$ has a maximal ideal $\Lambda_+$ with the corresponding $\Lambda_+$-adic topology. We write $\mathcal K$ for the space of rational functions of $q$ with coefficients from $K^0(X)\otimes \Lambda$, the space $\mathcal K$ is equipped with a symplectic form
\[
\Omega(f,g):=-[\on{Res}_{q=0}+\on{Res}_{q=\infty}](f(q^{-1}),g(q))\frac{dq}{q}.
\]
where $(\cdot,\cdot)$ is the $K$-theoretic intersection pairing on $K^0(X)\otimes \Lambda$:
\[
(a,b):=\chi(X;a\otimes b).
\]
Then $\mathcal K$ is a symplectic linear space. It can be decomposed into the direct sum 
\[
\mathcal K=\mathcal K_+\oplus \mathcal K_-
\]
where $\mathcal K_+$ is the subspace of Laurent polynomials in $q$ and $\mathcal K_-$ is the complementary subspace of rational functions of $q$ regular at $q=0$ and vanishing at $q=\infty$.

Given a compact K\"ahler manifold $X$, consider the $S_n$ modules
\[
[t(L),\ldots, t(L)]:=\sum_m (-1)^m H^m(\overline{M}_{0,n}(X,d); \mathcal O_{\overline{M}_{0,n}(X,d)}^{\text vir}\otimes _{i=1}^n t(L_i)),
\]
Where the input $t(q)$ is a Laurent polynomial in $q$ with coefficients in $K^0(X)\otimes \Lambda$ and $\Lambda$ is a $\lambda$-algebra.

The correlators of permutation equivariant quantum $K$-theory is defined as
\[
\langle t(L),\ldots, t(L)\rangle_{0,n,d}^{S_n}:=\pi_*(\mathcal O_{\overline{M}_{0,n}(X,d)}^{\text vir}\otimes _{i=1}^n t(L_i)),
\]
where $\pi_*$ is the $K$-theoretic push forward along the projection 
\[
\pi: \overline{M}_{0,n}(X,d)/S_n\rightarrow [pt].
\]
The permutation equivariant $K$-theoretic quasimap invariants can be defined similarly, replacing moduli spaces of stable maps by moduli spaces of stable  quasimaps. When $X$ admits a torus action, the extension of $K$-theoretic quasimap theory to torus equivariant setting is also straightforward.

%%%%%%%%%%%%%%%%%%%%%%%%%%%%%%%%%%%%%%%
\subsection{The $J^\epsilon$ Functions}

Consider the $K$-theoretic Poincar\'e metric 
\[
g_{\alpha\beta}=\langle \phi_\alpha,\phi_\beta\rangle:=\chi(W\sslash G,\phi_\alpha\phi_\beta),
\]
where $\{\phi_\alpha\}$ is a basis of $K^0(W\sslash G)\otimes \mathbb Q$, 
we define permutation-equivariant $K$-theoretic $J^\epsilon$-function of $X=W\sslash G$ as
\begin{equation}\label{J^epsilon-function}
\mathcal J^\epsilon_X(t,q):=\mathds 1+\frac{t}{1-q}+\sum_{\alpha,\beta}\sum_{(n,d)\neq (0,0),(1,0)}Q^d\langle \frac{\phi_\alpha}{(1-q)(1-qL)},t,\ldots, t\rangle_{0,n+1,d}^{\epsilon,S_n} g^{\alpha\beta}\phi_\beta.
\end{equation}
where $t=\sum_\alpha t^\alpha\phi_\alpha\in K^0(W\sslash G)\otimes \mathbb Q$ and the unstable terms in the summation, that is, when $n=0,d\neq 0, d(L_\theta)\leq 1/\epsilon$, are defined the same way as in \cite[Definition 5.1.1]{CK}, via $\mathbb C^*$-localization on graph spaces: Choose coordinates $[x_0,x_1]$ on $\mathbb P^1$, then the standard $\mathbb C^*$-action on $\mathbb P^1$ is
\[
t[x_0,x_1]=[tx_0,x_1], \quad \forall t\in \mathbb C^*.
\]
This action naturally induces an action on the $\epsilon$-stable quasimap graph spaces  $QG^\epsilon_{g,k}(W\sslash G,d)$. For $k=0$ and $\epsilon \leq \frac{1}{d(L_\theta)}$, we write 
\[
F_0\cong Q_{0,1}(W\sslash G,d)_0
\]
for the fixed locus parametrizing  quasimaps of class $d$ of the form $(C=\mathbb P^1,P,u)$ with a principal $G$-bundle $P$ on $\mathbb P^1$, a section $u:\mathbb P^1\rightarrow P\times_G W$ such that $u(x)\in P\times_G W^s$ for $x\neq 0\in \mathbb P^1$ and $0\in \mathbb P^1$ is a base-point of length $d(L_\theta)$.
Then the unstable terms in \eqref{J^epsilon-function} is defined as
\[
\sum_{\alpha,\beta}\sum\limits_{d\neq 0,d(L_\theta)\leq 1/\epsilon}Q^d\chi\left(F_0,\mathcal O^{\text{vir}}_{F_0}\otimes\left(\frac{ev_1^*(\phi_\alpha)}{\on{tr}_{\mathbb C^*}\wedge^*N^*_{F_0}}\right)\right)g^{\alpha\beta}\phi_\beta,
\]
where $N^*_{F_0}$ is the conormal bundle of the fixed locus $F_0$.

The permutation-equivariant big $J$-function of $X$ is 
\[
\mathcal J_X(t(q),q):=\mathds 1+\frac{t(q)}{1-q}+\sum_{\alpha,\beta}\sum_{(n,d)\neq (0,0),(1,0)}Q^d\langle \frac{\phi_\alpha}{(1-q)(1-qL)},t(L),\ldots, t(L)\rangle_{0,n+1,d}^{S_n}g^{\alpha\beta} \phi_\beta, 
\]
where $t(q)$ is a Laurent polynomial in $q$ with coefficients in $K^0(X)\otimes \Lambda$. We write $\mathcal L_{S_\infty,X}$ for the range of the big $J$-function in permutation-equivariant quantum $K$-theory of $X$.

%%%%%%%%%%%%

%%%%%%%%%%%%%%%

\section{$S$ \& $P$}%The $S$-operator and The $P$-series}

\subsection{The $S$-operator}
We use double brackets to denote the generating function
\[
\llangle\gamma_1L_1^{a_1},\ldots,\gamma_kL_k^{a_k}\rrangle_{0,k}^\epsilon:=\sum\limits_{n\geq 0,d\geq 0}Q^d\langle \gamma_1L_1^{a_1},\ldots,\gamma_kL_k^{a_k},t,\ldots,t\rangle_{0,k+n,d}^{\epsilon,S_n}
\]
where $t\in K^0(W\sslash G)\otimes \Lambda$ and summing over terms when $Q^\epsilon_{0,k+n}(W\sslash G,d)/S_n$ exists (does not include unstable terms). 

We consider the permutation-equivariant $\mathcal J^\epsilon$-function of $X=W\sslash G$,
\[
\mathcal J^\epsilon_X(t,q):=\mathds 1+\frac{t}{1-q}+\sum_{\alpha,\beta}\sum_{(n,d)\neq (0,0),(1,0)}Q^d\langle \frac{\phi_\alpha}{(1-q)(1-qL)},t,\ldots, t\rangle_{0,n+1,d}^{\epsilon,S_n}g^{\alpha\beta}\phi_\beta.
\]
with unstable terms defined as before and $t\in K^0(W\sslash G)\otimes \mathbb Q$.
Givental \cite{Givental 00} introduced a non-constant metric for permutation-equivariant quantum $K$-theory. Similarly, we can define a non-constant metric for each $\epsilon\geq 0+$:
\[
G_{\alpha\beta}^\epsilon:=g_{\alpha\beta}+\llangle \phi_\alpha,\phi_\beta\rrangle_{0,2}^\epsilon,
\]
and the inverse tensor
\begin{align*}
G^{\alpha\beta}_\epsilon=& g^{\alpha\beta}-\llangle \phi^\alpha,\phi^\beta\rrangle_{0,2}^\epsilon+\sum\limits_\mu\llangle \phi^\alpha,\phi^\mu\rrangle_{0,2}^\epsilon\llangle \phi_\mu,\phi^\beta\rrangle_{0,2}^\epsilon\\
& -\sum\limits_{\mu,\nu}\llangle \phi^\alpha,\phi^\mu\rrangle_{0,2}^\epsilon\llangle\phi_\mu,\phi^\nu\rrangle_{0,2}^\epsilon\llangle\phi_\nu,\phi^\beta\rrangle_{0,2}^\epsilon+\cdots
\end{align*}
where $\{\phi^\alpha\}$ is the Poincar\'e  dual basis of $\{\phi_\alpha\}$.

The operator $S^\epsilon_t:\mathcal K\rightarrow \mathcal K$ is defined as
\[
(S^\epsilon_t)(q)(\gamma)=\sum\limits_{\alpha,\beta}\left(\langle \phi_\alpha,\gamma\rangle+ \sum\limits_{(n,d)\neq (0,0)}Q^d\langle \frac{\phi_\alpha}{1-qL},\gamma,t,\ldots,t\rangle^{\epsilon,S_n}_{0,n+2,d}\right)g^{\alpha\beta}\phi_\beta
\]

The operator $(S^\epsilon_t)^*:\mathcal K\rightarrow \mathcal K$ is defined as
\[
(S^\epsilon_t)^*(q)(\gamma)=\sum_{\alpha,\beta}\left(\langle\gamma,\phi_\alpha\rangle+\sum\limits_{(n,d)\neq (0,0)}Q^d\langle \frac{\gamma}{1-qL},\phi_\alpha,t,\ldots,t\rangle^{\epsilon,S_n}_{0,n+2,d}\right)G^{\alpha\beta}_\epsilon\phi_\beta,
\]

\begin{proposition}\label{prop:unitary}
The operator $S^\epsilon_t$ is a unitary operator:
\[
(S_t^\epsilon)^*(q)\circ(S_t^{\epsilon})(1/q)=Id,
\]
for all $\epsilon \geq 0+$, where $0+$ represents a sufficiently small positive rational number such that $\epsilon=0+$ is in the first chamber $(0,\frac{1}{d(L_\theta)}]$ for all class $d$.
\end{proposition}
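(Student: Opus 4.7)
The plan is to verify unitarity by expanding the composition $(S_t^\epsilon)^*(q)\circ (S_t^\epsilon)(1/q)$ from the definitions, recognizing the result as a gluing of 2-point $\epsilon$-stable quasimap correlators, and then collapsing to the identity using the quasimap splitting axiom together with a $K$-theoretic residue identity on the nodal boundary.

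First I would expand both operators and apply the composition to a test class $\gamma$, obtaining a double sum of products of 2-point-type correlators joined through the inverse tensor $G_\epsilon^{\alpha\beta}$ (from $S^*$) and the constant pairing $g^{\alpha\beta}$ (from $S$). The decisive point is that the non-constant metric $G^\epsilon_{\alpha\beta}=g_{\alpha\beta}+\llangle\phi_\alpha,\phi_\beta\rrangle_{0,2}^\epsilon$ is engineered so that its inverse, expanded as the geometric series displayed in Section~\ref{section-permut-equiv}, telescopes every string of 2-point $\epsilon$-stable quasimap bubbles that occurs at the gluing point. After this telescoping, the quasimap splitting axiom for the boundary divisor in $Q^\epsilon_{0,n+2,d}(W\sslash G)/S_n$ rewrites the composition as a sum of single correlators on the nodal locus, with two distinguished markings carrying $\phi_\alpha/(1-qL_1)$ and $\gamma/(1-q^{-1}L_2)$ respectively, together with the direct (no-quasimap) term equal to $\gamma$.

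To finish, I would invoke the $K$-theoretic identity
\[
\frac{1}{1-qL_1}+\frac{1}{1-q^{-1}L_2} - 1 = \frac{1-L_1L_2}{(1-qL_1)(1-q^{-1}L_2)},
\]
together with the fact that $1-L_1L_2$ is (up to sign) the $K$-theoretic Euler class of the normal bundle to the nodal boundary inside the moduli space, i.e.\ a defining class for the node. The first two summands on the left, after resumming the $S_n$-symmetrized $t$-insertions via the splitting axiom in reverse, reproduce the values of $(S_t^\epsilon)(1/q)(\gamma)$ and $(S_t^\epsilon)^*(q)(\gamma)$ separately and so cancel against their occurrences in the expansion; the right-hand side, being supported on the node, reassembles into boundary contributions that cancel pairwise under a second application of the splitting axiom. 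What remains is the direct term $\gamma$.

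The main obstacle I anticipate is the bookkeeping required for permutation-equivariance under splitting: the $t$-markings on a reducible genus-0 curve must be distributed between the two components in an $S_n$-symmetric manner (this is precisely where Givental's permutation-equivariant framework plays a decisive role), and the various unstable strata implicit in the definitions of $S^\epsilon$, $G^\epsilon$, and the unstable terms of the $\mathcal J^\epsilon$-function must be reconciled with the corresponding unstable boundary components in the splitting axiom. Once this reconciliation is in place, the identity $(S_t^\epsilon)^*(q)\circ(S_t^\epsilon)(1/q)=\mathrm{Id}$ follows coefficient-by-coefficient in the Novikov variable $Q$.
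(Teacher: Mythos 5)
Your route is genuinely different from the paper's. You propose to prove unitarity ``internally,'' by expanding the composition, invoking a splitting axiom at the nodal boundary, and collapsing everything with the identity $\frac{1}{1-qL'}+\frac{1}{1-q^{-1}L''}-1=\frac{1-L'L''}{(1-qL')(1-q^{-1}L'')}$, in the spirit of Givental's WDVV paper. The paper instead introduces an auxiliary generating series on the quasimap graph space, $\sum_{n,d}Q^d\langle \gamma(1-p_0),t,\ldots,t,\delta(1-p_\infty)\rangle^{QG^\epsilon,S_n}_{0,n+2,d}$, which is defined without localization and hence has no pole at $q=1$; virtual $\mathbb C^*$-localization identifies it with the product $\sum_{\alpha,\beta}\bigl(\langle\phi_\alpha,\gamma\rangle+\llangle\tfrac{\phi_\alpha}{1-qL},\gamma\rrangle_{0,2}^\epsilon\bigr)g^{\alpha\beta}\bigl(\langle\delta,\phi_\beta\rangle+\llangle\delta,\tfrac{\phi_\beta}{1-L/q}\rrangle_{0,2}^\epsilon\bigr)$, and the absence of a pole at $q=1$ forces this product to equal $\langle\delta,\gamma\rangle+\llangle\delta,\gamma\rrangle_{0,2}^\epsilon$, i.e.\ the non-constant metric $G^\epsilon$; composing with $G^{-1}_\epsilon$ then gives the identity. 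The whole proposition is thereby reduced to one geometric input (the graph-space series is defined without localization) plus a Liouville-type analysis of poles.

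As written, your proposal has gaps precisely where this input would be needed. First, the third paragraph is not yet an argument: in $(S^\epsilon_t)^*(q)\circ(S^\epsilon_t)(1/q)(\gamma)$ each operator occurs exactly once, so it is unclear what the ``first two summands'' are meant to cancel against, and the claim that the nodal term ``reassembles into boundary contributions that cancel pairwise under a second application of the splitting axiom'' is asserted rather than derived --- that assertion \emph{is} the identity to be proved. Second, for $1-L_1L_2$ to be the $K$-theoretic Euler class of the normal bundle of the nodal divisor, $L_1$ and $L_2$ must be the cotangent lines at the two branches of the node, whereas the $L$'s in the definitions of $S^\epsilon_t$ and $(S^\epsilon_t)^*$ are cotangent lines at marked points; your identity conflates the two, and the comparison between them (together with the inclusion-exclusion over chains of rational bubbles that is responsible for $G^{\alpha\beta}_\epsilon$) is exactly the bookkeeping you defer. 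Third, you rely on a splitting axiom for the virtual structure sheaf of $\epsilon$-stable quasimap moduli; this is plausible but is established neither in this paper nor in its references, while the paper's proof needs only localization on the graph space. If you wish to keep your route, the cleanest repair is to isolate and prove the single identity $\sum_{a,b}\bigl(\langle\phi_a,\gamma\rangle+\llangle\tfrac{\phi_a}{1-L/q},\gamma\rrangle_{0,2}^\epsilon\bigr)g^{ab}\bigl(\langle\phi_b,\delta\rangle+\llangle\tfrac{\phi_b}{1-qL},\delta\rrangle_{0,2}^\epsilon\bigr)=\langle\delta,\gamma\rangle+\llangle\delta,\gamma\rrangle_{0,2}^\epsilon$; for that, the graph-space regularity argument is by far the shortest path.
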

\begin{proof}
Consider elements $p_0,p_\infty\in K^0_{\mathbb C^*}(\mathbb P^1)$ defined by the restriction to the fixed points:
\[
p_0|_0=q, p_0|_\infty=1, \quad \text{and} \quad p_\infty|_0=1, p_\infty|_\infty=1/q.
\]
For arbitrary elements $\gamma, \delta \in K(W\sslash G)$, we consider the generating series
\begin{equation}\label{graph-space-gen-func}
\sum\limits_{n\geq 0, d\geq 0}Q^d\langle \gamma(1-p_0),t,\ldots,t,\delta(1-p_\infty)\rangle_{0,n+2,d}^{QG^\epsilon,S_n}
\end{equation}
Applying $\mathbb C^*$-localization, the generating series \eqref{graph-space-gen-func} can be written as
\begin{align*}
& \sum\limits_{\alpha,\beta} \left(\langle \phi_\alpha,\gamma \rangle+ \llangle \frac{\phi_\alpha}{1-qL},\gamma\rrangle_{0,2}^\epsilon \right)g^{\alpha\beta}\left(\langle \delta,\phi_\beta\rangle +\llangle \delta,\frac{\phi_\beta}{1-L/q}\rrangle_{0,2}^\epsilon\right) & \\
=&
\sum\limits_{\alpha,\beta}\langle \phi_\alpha,\gamma\rangle g^{\alpha\beta}
\left(\langle \delta,\phi_\beta\rangle+\llangle \delta,\phi_\beta \rrangle_{0,2}^\epsilon\right) +O\left(\frac{1}{1-q}\right)\\
=& \sum\limits_{\alpha}\langle \phi_\alpha,\gamma\rangle (\langle \delta,\phi^\alpha\rangle +\llangle \delta, \phi^\alpha\rrangle_{0,2}^\epsilon)+O\left(\frac{1}{1-q}\right)\\
=& 
\langle \delta,\gamma \rangle +\llangle \delta,\gamma \rrangle_{0,2}^\epsilon+O\left(\frac{1}{1-q}\right)
\end{align*}
where the first equation comes from the expansions
\[
\frac{1}{1-qL}=\sum_{i\geq 0} \frac{q^i(L-1)^i}{(1-q)^{i+1}}
\]
and 
\begin{align*}
\frac{1}{1-L/q}&=\sum_{i\geq 0} \frac{(1/q)^i(L-1)^i}{(1-1/q)^{i+1}}=\sum_{i\geq 0}\frac{q(L-1)^i}{(q-1)^{i+1}}\\
&=1+\frac{1}{q-1}+\sum_{i\geq 1}\frac{q(L-1)^i}{(q-1)^{i+1}}=1+O\left(\frac{1}{1-q}\right).
\end{align*}
On the other hand, the generating series \eqref{graph-space-gen-func} has no pole at $q=1$, hence 
\[
\sum\limits_{\alpha,\beta} \left(\langle \phi_\alpha,\gamma \rangle+ \llangle \frac{\phi_\alpha}{1-qL},\gamma\rrangle_{0,2}^\epsilon \right)g^{\alpha\beta}\left(\langle \delta,\phi_\beta\rangle +\llangle \delta,\frac{\phi_\beta}{1-L/q}\rrangle_{0,2}^\epsilon\right)= \langle \delta,\gamma \rangle+\llangle \delta, \gamma \rrangle_{0,2}^\epsilon.
\]
Therefore
\begin{align*}
& (S_t^\epsilon)^*(q)\circ(S_t^{\epsilon})(1/q)(\gamma)\\
=& 
\sum\limits_{\alpha,\beta}G^{\alpha\beta} \phi_\beta  
\sum\limits_{a,b} \left(\langle\phi_b,\phi_\alpha \rangle+\llangle \frac{\phi_b}{1-qL},\phi_\alpha\rrangle^{\epsilon}_{0,2}\right)
\left(\langle \phi_a,\gamma\rangle+ \llangle \frac{\phi_a}{1-L/q},\gamma\rrangle^{\epsilon}_{0,2}\right)g^{ab}\\
=&
\sum\limits_{\alpha,\beta}G^{\alpha\beta}\phi_\beta(\langle \phi_\alpha,\gamma\rangle+\llangle \phi_\alpha,\gamma\rrangle_{0,2}^\epsilon )\\
=&
\sum_\beta \phi_\beta \langle \phi^\beta,\gamma\rangle\\
=&
\gamma
\end{align*}
\end{proof}

%%%%%%%%%%%%%%%%%%%%%%%%%%%%%%%
\subsection{The $P$-series}
We also consider a generating series on the graph space known as $P$-series.
\[
P^\epsilon(t,q):=\sum_{\alpha,\beta}\phi_\beta G^{\alpha\beta}_\epsilon\llangle \phi_\alpha(1-p_\infty)\rrangle_{0,1}^{QG^\epsilon}
\]
where $p_\infty\in K^0_{\mathbb C^*}(\mathbb P^1)$ is defined by the restriction to the fixed points:
\[
p_\infty|_0=1, \quad p_\infty|_\infty=1/q.
\]
Then we have 
\begin{proposition}\label{prop:SP}
For every $\epsilon\geq 0+$, the equation
$\mathcal J_t^\epsilon(q)=S_t^\epsilon(q)(P^\epsilon(t,q))$ holds.
\end{proposition}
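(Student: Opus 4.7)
Following the strategy of the proof of Proposition~\ref{prop:unitary}, I introduce a one-pointed generating series on the $\epsilon$-stable quasimap graph space $QG^\epsilon$ and evaluate it via $\mathbb C^*$-localization so that it simultaneously realizes $\mathcal J^\epsilon(q)$ and $S^\epsilon(q)(P^\epsilon(t,q))$. The natural candidate is
\[
\mathcal F^\epsilon(q) \;:=\; \sum_{\alpha,\beta}\phi_\beta\, g^{\alpha\beta}\,\llangle \frac{\phi_\alpha(1-p_\infty)}{1-qL_1}\rrangle^{QG^\epsilon}_{0,1},
\]
where $L_1$ is the tautological cotangent at the distinguished marker $x_1$ and $p_\infty$ is pulled back from $\mathbb P^1$ via $\varphi\circ\sigma_1$. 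As in the proof of Proposition~\ref{prop:unitary}, the potentially polar contributions cancel against the $\mathbb C^*$-Euler class of the normal bundle at the fixed component where $x_1$ sits on $C_0$ at $\infty$, making $\mathcal F^\epsilon(q)$ a well-defined rational function of $q$.

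\emph{First identification} ($\mathcal F^\epsilon(q)$ realizes $\mathcal J^\epsilon(q)$, modulo the normalization absorbed into $\mathds 1$ and the unstable sector). Applying $\mathbb C^*$-localization with respect to the standard action on $\mathbb P^1$, the class $1-p_\infty$ vanishes on every fixed component with $\varphi(x_1)=0$, so only $\varphi(x_1)=\infty$ fixed loci contribute. On the fixed family where $x_1$ lies on $C_0$ at $\infty$, the Euler-class cancellation in the $T_\infty \mathbb P^1$ direction assembles the insertion $(1-p_\infty)/(1-qL_1)$ into the descendant weight $1/((1-q)(1-qL))$ appearing in the definition of $\mathcal J^\epsilon$. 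The remaining fixed loci (marker on a contracted tree at $\infty$) produce the remaining terms of the defining sum of $\mathcal J^\epsilon$, and the unstable $(n,d)$ terms match the $F_0$-localization contributions of \cite[Definition~5.1.1]{CK}.

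\emph{Second identification} ($\mathcal F^\epsilon(q)$ realizes $S^\epsilon(q)(P^\epsilon(t,q))$, modulo the same normalization). The same $\mathbb C^*$-fixed loci, reorganized by the bipartition $C=T_0\cup C_0\cup T_\infty$, factor as (data on $T_0$)$\times$(data on $T_\infty$), glued to $C_0$ along the respective nodes. Summing over the dual basis at each gluing node produces the non-constant metric $G^{\alpha\beta}_\epsilon$, arising as the two-pointed $\epsilon$-quasimap sector. The $T_0$-side summed over all configurations reproduces the graph-space series $P^\epsilon(t,q)$; the $T_\infty$-side --- which carries the distinguished marker with its descendant $1/(1-qL_1)$ and an evaluation at the gluing node --- yields the $S^\epsilon(q)$-operator applied to the $T_0$-side value. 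Their composition via $G^{\alpha\beta}_\epsilon$ gives $S^\epsilon(q)(P^\epsilon(t,q))$.

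Matching the two identifications gives $\mathcal J^\epsilon(q)=S^\epsilon(q)(P^\epsilon(t,q))$. The main obstacle is the detailed $\mathbb C^*$-localization bookkeeping: (i) verifying that the Euler-class cancellations correctly produce the $\frac{1}{1-q}$ factor in $\mathcal J^\epsilon$, (ii) confirming that the non-constant metric $G^{\alpha\beta}_\epsilon$ emerges as the natural gluing factor rather than the constant metric $g^{\alpha\beta}$, and (iii) matching the unstable contributions $(n,d)=(0,0),(1,0)$ and the $d(L_\theta)\le 1/\epsilon$ cases with the corresponding $\mathds 1$, $\frac{t}{1-q}$, and $F_0$-terms in the definitions of $\mathcal J^\epsilon$ and $P^\epsilon$.
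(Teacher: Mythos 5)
Your plan takes a genuinely different route from the paper, and the route has gaps that are not mere bookkeeping. The paper does not introduce an auxiliary descendant series on the graph space: it applies $\mathbb C^*$-localization to $P^\epsilon(t,q)$ itself. The insertion $\phi_\alpha(1-p_\infty)$ forces the marking to $\infty$; the data concentrated at $0\in\mathbb P^1$ (node smoothing together with $T_0\mathbb P^1$, contributing the weight $\frac{1}{(1-q)(1-qL)}$) assembles into the components $\langle\phi_a,\mathcal J^\epsilon\rangle$ of the $\mathcal J^\epsilon$-function, while the data at $\infty$ assembles into the adjoint operator at $1/q$. This yields $P^\epsilon=(S^\epsilon_t)^*(1/q)(\mathcal J^\epsilon)$, and the proposition follows by applying $S^\epsilon_t(q)$ and invoking the unitarity of Proposition~\ref{prop:unitary}. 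Your proposal never uses unitarity, and the two ``identifications'' you substitute for it do not hold as stated.

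Concretely: (i) $\mathcal F^\epsilon$ is not well defined. On the quasimap moduli spaces $1/(1-qL)$ makes sense because $L-1$ is nilpotent, but on the graph space the restriction of $L_1$ to the fixed locus with $x_1=\infty\in C_0$ is the character $q^{-1}$ of $\mathbb C^*$ (with the paper's conventions, under which the cotangent line at $0$ has character $q$), so the denominator $1-qL_1$ vanishes identically there; the normal-bundle Euler factor at that locus is $1-q^{-1}$ and does not cancel this, contrary to your claim. With the opposite convention one instead gets $1/(1-q^2)$, which already fails to reproduce the leading term $\mathds 1$ of $\mathcal J^\epsilon$ at $(n,d)=(0,0)$. (ii) The first identification ignores the $0$-side: a $\mathbb C^*$-fixed quasimap can carry degree and $t$-markings at $0\in\mathbb P^1$ even when $x_1$ sits at $\infty$, so the localization sum factors as (data at $0$)$\times$(data at $\infty$) and cannot equal the single-moduli sum defining $\mathcal J^\epsilon$; in the paper's computation it is precisely the $0$-side, not the side carrying the marking, that produces $\mathcal J^\epsilon$. (iii) In the second identification you assert that gluing at the nodes produces $G^{\alpha\beta}_\epsilon$; localization gluing produces the constant pairing $g^{\alpha\beta}$, and the non-constant metric appears only after resumming chains of two-point insertions --- a step you do not perform, and which is exactly what the adjoint $(S^\epsilon_t)^*$ (built from $G^{\alpha\beta}_\epsilon$) and the unitarity relation encode. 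The three items you defer as ``the main obstacle'' are therefore the substance of the proof; the paper's route through $P^\epsilon=(S^\epsilon_t)^*(1/q)(\mathcal J^\epsilon)$ is how they get resolved.
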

\begin{proof}
Apply $\mathbb C^*$-localization to $P^\epsilon(t,q)$, we have
\begin{align*}
 P^\epsilon(t,q)= 
 \sum_{\alpha,\beta}\phi_\beta G^{\alpha\beta}_\epsilon
 \sum_{a,b} &
\left( 
\phi_a+\frac{1}{1-q}\langle \phi_a,t \rangle+\llangle \frac{\phi_a}{(1-q)(1-qL)}\rrangle_{0,1}^\epsilon
\right)
g^{ab}\\
& \cdot \left(
\langle\phi_\alpha,\phi_b \rangle+\llangle (1-1/q)\phi_\alpha,\frac{\phi_b}{((1-1/q)(1-L/q)} \rrangle_{0,2}^\epsilon
\right)
\end{align*}
Hence
\begin{align*}
 P^\epsilon(t,q)=& 
 \sum_{\alpha,\beta}\phi_\beta G^{\alpha\beta}_\epsilon
\left(
\langle \phi_\alpha,\mathcal J^\epsilon\rangle +\llangle \phi_\alpha,\frac{\mathcal J^\epsilon}{1-L/q}\rrangle^\epsilon_{0,2}
\right)\\
=& (S_t^\epsilon)^*(1/q)(J^\epsilon)
\end{align*}
Then the proposition follows from the unitary property of the $S$-operators (Proposition \ref{prop:unitary}).
\end{proof}

Since we have the expansion
\[
J^\epsilon(t,q)=\mathds 1+O(Q)+O\left(\frac{1}{1-q}\right)
\]
and 
\begin{align*}
(S_t^\epsilon)^*(1/q)(\gamma)= & \sum_{\alpha,\beta}\left( \langle \gamma,\phi_\alpha\rangle+\llangle \gamma,\phi_\alpha\rrangle_{0,2}^\epsilon\right)G^{\alpha\beta}_\epsilon\phi_\beta+O\left(\frac{1}{1-q}\right)\\
=& \gamma+O\left(\frac{1}{1-q}\right)
\end{align*}
Hence
\[
(S_t^\epsilon)^*(1/q)(J^\epsilon(t,q))=\mathds 1+O(Q)+O\left(\frac{1}{1-q}\right)
\]
The fact that $P^\epsilon(t,q)$ has no pole at $q=1$ implies 
\[
P^\epsilon(t,q)=(S_t^\epsilon)^*(1/q)(J^\epsilon(t,q))=\mathds 1+O(Q)
\]
In particular, for $\epsilon>1$, we have  $J(t,q)=\mathds 1 +O\left(\frac{1}{1-q}\right)$ and $P(t,q)=\mathds 1$. Then Proposition \ref{prop:SP} becomes
\[
J(t,q)=S_t(q)(\mathds 1),
\]
which is a consequence of the $K$-theoretic string equation in \cite{Givental}.

Let $\gamma \in K^0(W\sslash G)\otimes \Lambda$ be an invertible element of the form $\gamma=\mathds 1+O(Q)$. For each $\epsilon \geq 0+$, we have
\[
S^\epsilon_t(q)(\gamma)=\gamma+\tau_\gamma^\epsilon(t)\frac{1}{1-q}+\left(\frac{1}{1-q}\right)^2 p_\epsilon\left(\frac{q}{1-q}\right),
\]
where $p_\epsilon\left(\frac{q}{1-q}\right)$ stands for a power series in the variable $q/(1-q)$ with coefficients in $K^0(W\sslash G)\otimes \Lambda \{\{t_i\}\}$. 
For $\gamma=\mathds 1 +O(Q)$, we have
\begin{align*}
S_t^\epsilon(\gamma)=& S_t^\infty(\gamma) & \mod Q\\
=& S_t^\infty(\mathds 1) & \mod Q\\
=& J^\infty(t) & \mod Q\\
=& \mathds 1+\frac{t}{1-q}+\left(\frac{1}{1-q}\right)^2p_\infty\left(\frac{q}{1-q}\right) & \mod Q,
\end{align*}
where $p_\infty\left(\frac{q}{1-q}\right)$ stands for a power series in $q/(1-q)$ with coefficients in $K^0(W\sslash G)\otimes \Lambda \{\{t_i\}\}$. 
Therefore
\[
\tau_\gamma^\epsilon(t)=t+O(Q).
\]
is an invertible transformation on $K^0(W\sslash G)\otimes \Lambda$. For $0+\leq \epsilon_1 \leq \epsilon_2 \leq \infty$, let
\[
\tau_{\gamma}^{\epsilon_1,\epsilon_2}(t):=(\tau_\gamma^{\epsilon_1})^{-1}\circ \tau_\gamma^{\epsilon_2}(t),
\]
then
\[
S_{\tau_{\gamma}^{\epsilon_1,\epsilon_2}(t)}^{\epsilon_1}(q)(\gamma)=S_t^{\epsilon_2}(q)(\gamma)+\frac{1}{(1-q)^2}p_{\epsilon_1,\epsilon_2}\left(\frac{q}{1-q}\right),
\]
where $p_{\epsilon_1.\epsilon_2}\left(\frac{q}{1-q}\right)$ stands for a power series in $q/(1-q)$ with coefficient in $K^0(W\sslash G)\otimes \Lambda \{\{t_i\}\}$. 
In general, we have the following lemma.

\begin{lemma}\label{lem:difference}
For every $\epsilon \geq 0+$, there exist a uniquely determined element $P^{\infty,\epsilon}(t,q)\in \mathcal K_+$, convergent in the $Q$-adic topology for each $t$, and a uniquely determined map $t\mapsto \tau^{\infty,\epsilon}(t)$ on $K^0(W\sslash G)\otimes \Lambda$ satisfying the following properties:
\begin{itemize}
\item $\tau^{\infty,\epsilon}(t)=t  \mod \, Q$;
\item $P^{\infty,\epsilon}(t,q)=\mathds 1 \mod \, Q$;
\item $ S^\epsilon_t(q)(P^\epsilon(t,q))=S^\infty_{\tau^{\infty,\epsilon}(t)}(q)(P^{\infty,\epsilon}(\tau^{\infty,\epsilon}(t),q))+ \frac{1}{(1-q)^2}p_{\epsilon,\infty}\left(\frac{q}{1-q}\right)$, where $p_{\epsilon,\infty}\left(\frac{q}{1-q}\right)$ stands for a power series in the variable $q/(1-q)$ with coefficient in $K^0(W\sslash G)\otimes \Lambda \{\{t_i\}\}$. 
\end{itemize}
\end{lemma}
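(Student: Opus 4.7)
The plan is to build $\tau^{\infty,\epsilon}$ and $P^{\infty,\epsilon}$ inductively in the Novikov variable $Q$, solving at each stage a matching problem for the ``visible part'' of condition (3). First note that the admissible remainder $\frac{1}{(1-q)^2}p_{\epsilon,\infty}(\frac{q}{1-q}) = \sum_n a_n \frac{q^n}{(1-q)^{n+2}}$ is, via the binomial expansion $q^n = (1-(1-q))^n$, a sum of $(1-q)^{-m}$ terms with $m \geq 2$ only. Hence in the formal expansion around $q = 1$ in powers of $q/(1-q)$, the admissible remainder contributes to neither the $(1-q)^0$ nor the $(1-q)^{-1}$ coefficient. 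Condition (3) is therefore equivalent to matching the pair $(A, B)$ on both sides, with $A$ the $(1-q)^0$-coefficient and $B$ the coefficient of $1/(1-q)$.

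For the base case modulo $Q$: the discussion preceding the lemma yields $P^\epsilon(t,q) \equiv \mathds{1}$ and $\mathcal J^\epsilon(t,q) \equiv \mathcal J^\infty(t,q) \pmod Q$, so $\tau^{\infty,\epsilon}(t) \equiv t$ and $P^{\infty,\epsilon}(t,q) \equiv \mathds{1}$ satisfy (1)--(3) modulo $Q$. For the inductive step, suppose $(\tau^{\infty,\epsilon}, P^{\infty,\epsilon})$ have been constructed modulo $Q^{N+1}$ so that (3) holds modulo $Q^{N+1}$. The $Q^{N+1}$-coefficient $D_{N+1}(q)$ of $(\text{LHS}-\text{RHS})$ depends only on prior orders, hence is determined. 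Modify $\tau^{\infty,\epsilon}(t) \to \tau^{\infty,\epsilon}(t) + Q^{N+1}\delta\tau(t)$ and $P^{\infty,\epsilon}(s,q) \to P^{\infty,\epsilon}(s,q) + Q^{N+1}\delta P(s)$, taking $\delta P$ to be $q$-constant. Only the $Q^0$-part of $S^\infty$ acts on these new corrections at order $Q^{N+1}$; invoking the expansion $S^\infty_s(q)(\gamma) = \gamma + \tau^\infty_\gamma(s)/(1-q) + (\cdots)$ (with $\tau^\infty_\gamma(s) = s + O(Q)$ when $\gamma = \mathds{1} + O(Q)$) shows the linearized shift in $(A, B)$ from $(\delta\tau, \delta P)$ is governed, modulo $Q$, by a $2\times 2$ block matrix of the form
\[
\begin{pmatrix} 0 & \mathrm{Id} \\ \mathrm{Id} & \ast \end{pmatrix}
\]
(the $(1,2)$-entry comes from $S^\infty_t(q)(\delta P)|_{q=\infty} = \delta P$, the $(2,1)$-entry from $\partial_s \mathcal J^\infty(s,q) \big|_{s=t}$ having residue $\mathrm{Id}$ at $q=1$). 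This matrix has determinant $-\mathrm{Id}$, hence is invertible, so $(\delta\tau, \delta P)$ is uniquely determined by the requirement that the corrected $(A,B)$ cancel $(A(D_{N+1}), B(D_{N+1}))$, completing the induction.

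Uniqueness of the full solution then follows from invertibility at every inductive step. The chief technical obstacle is justifying the restriction of $\delta P$ to $q$-constants while securing uniqueness within the full $\mathcal K_+$: any genuine $q$-dependence in $\delta P$ would produce extra analytic (non-polar) terms at $q = 1$ on the RHS which cannot be absorbed into the admissible remainder, whose Laurent support at $q = 1$ is purely polar of order $\geq 2$. Verifying this rigidity, along with the precise identification of the Jacobian entries via the displayed expansion of $S^\infty_s(q)(\gamma)$, is the main work; once established, the induction is routine, and Proposition~\ref{prop:SP} together with the inductive setup ensures that the resulting $P^{\infty,\epsilon}(t,q)$ lies in $\mathcal K_+$ and converges in the $Q$-adic topology.
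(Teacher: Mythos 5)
The paper does not actually carry out a proof of this lemma: it says only that $\tau^{\infty,\epsilon}$ and $P^{\infty,\epsilon}$ ``can be constructed by induction on the degree'' and omits the construction as ``messy.'' Your strategy --- induction on Novikov degree, linearizing the matching problem at each order --- is exactly the one the paper gestures at, so there is no divergence of method to report; the question is only whether your sketch closes the argument. It does not, and the gap is precisely the one you flag yourself, except that your proposed resolution points the wrong way.

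Your reduction of condition (3) to matching the two coefficients $(A,B)$ of $(1-q)^0$ and $(1-q)^{-1}$ presupposes that the difference of the two sides a priori lies in $\Lambda\oplus\frac{1}{1-q}\Lambda\oplus\frac{1}{(1-q)^2}\Lambda[[q/(1-q)]]$. That holds for $S^\epsilon_t(q)$ applied to a $q$-independent insertion (this is the displayed expansion preceding the lemma), but here the insertions $P^\epsilon(t,q)$ and $P^{\infty,\epsilon}(\cdot,q)$ are genuine Laurent polynomials in $q$, and multiplying the polar correlator terms by powers of $q$ creates nontrivial Laurent-polynomial parts (e.g.\ $q^2/(1-q)=-q-1+1/(1-q)$). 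So condition (3) also forces the vanishing of the entire polynomial-in-$q$ part of the difference and controls the poles at roots of unity $\xi\neq 1$; the order-by-order problem is larger than a $2\times 2$ system. Correspondingly, restricting $\delta P$ to $q$-constants undercounts the degrees of freedom that are actually needed: Remark \ref{rmk:cone} writes $P^{\infty,\epsilon}(t,q)=\sum_i C_i(Q,\{t_i\},q)\phi_i$ with genuinely $q$-dependent coefficients, so in general $\delta P$ must range over all of $\mathcal K_+$, and the $q$-dependent part of $\delta P$ is exactly what is required to cancel the $q$-dependent polynomial part of the defect $D_{N+1}(q)$ --- it is not an obstruction to uniqueness but a necessity for existence. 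What has to be proved is that the enlarged linear system (unknowns $\delta\tau$ and all $q$-coefficients of $\delta P$; equations from the polynomial part, the poles at $\xi\neq1$, and the simple pole at $q=1$) is uniquely solvable, e.g.\ by a triangularity argument in the $q$-degree using $S^\infty(q)(\gamma)=\gamma+(\text{terms vanishing at }q=\infty)$. Since you explicitly defer this as ``the main work,'' the proof is incomplete at its essential step.
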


\begin{proof}
Elements $\tau^{\infty,\epsilon}$ and $P^{\infty,\epsilon}(t,q)$ can be constructed by induction on the degree $d$ and the construction is unique. The construction is not difficult but messy, and is therefore omitted. 
\end{proof}

\begin{remark}\label{rmk:cone}
We write
\[
P^{\infty,\epsilon}(t,z)=\sum\limits_{i}C_i(Q,\{t_i\},q)\phi_i,
\]
where the coefficients $C_i(Q,\{t_i\},q)\in \Lambda[[\{t_i\},q,1/q]]$. Hence by general properties of $K$-theoretic Gromov-Witten invariants (see \cite{Givental}), the element
\[
S_t^\infty(q)(P^{\infty,\epsilon}(t,q))=\sum\limits_i C_i (1-q)\partial_{t_i} J^\infty (t,q)
\]
is on the Lagrangian cone $\mathcal L_{S_\infty,W\sslash G}$ associated to the genus $0$ $K$-theoretic Gromov-Witten theory of $W\sslash G$.
\end{remark}

%%%%%%%%%%%%%%%%%%%%%%%%%%%%%%%%%%%%%

\section{Genus $0$ Wall-Crossing}
The purpose of this section is to establish a wall-crossing result that relates genus $0$ $K$-theoretic quasimap invariants for different stability parameters. The proof, which is parallel to the treatment for cohomological quasimap theory in \cite{CK} and the toric case in \cite{Givental}, is based on localization. To this end, we assume in this section that there is a torus $T$ action on $W$ and the action commutes with the $G$-action. This induces $T$-actions on $[W/G]$ and $W\sslash G$. We assume that the $T$-action on $W\sslash G$ has isolated fixed points and isolated $1$-dimensional orbits.

\subsection{Fixed Point Localization}
 Following the analysis of fixed loci in \cite{CK}, we can describe the fixed loci of the $T$-equivariant quasimaps. 

Consider the $T$-equivariant version of the permutation-equivariant $K$-theoretic $S^\epsilon$-operator, denoted by $S^\epsilon_{t,T}$. Consider the $T$-fixed point basis $$\{\phi_\beta\}_\beta\subset K^0(W\sslash G)_T.$$ 
Given a $T$-fixed point $\beta\in (W\sslash G)^T$, we have the restriction of $S^\epsilon_{t,T}$ to $\beta$:
\[
\beta^*S^\epsilon_{t,T}(q)(\gamma)=\sum\limits_{\alpha}\left(\langle \phi_\alpha,\gamma\rangle+ \sum\limits_{(n,d)\neq (0,0)}Q^d\langle \frac{\phi_\alpha}{1-qL},\gamma,t,\ldots,t\rangle^{\epsilon,S_n,T}_{0,n+2,d}\right)g^{\alpha\beta}
\]

Fix $\epsilon>0$, an effective class $d\neq 0$ and a non-negative integer $n$. For a $T$-fixed quasimap
\[
((C,x_1,x_2,\ldots,x_{n+2}), P,\mu),
\]
and $(P^{\prime},\mu^{\prime})$, the restriction of the pair $(P,\mu)$ to an irreducible component $C^{\prime}$ of $C$, the rational map
\[
[\mu^{\prime}]: C^\prime \dashedrightarrow W\sslash G,
\]
induces a regular map
\[
[\mu^\prime]_{reg}:C^\prime \rightarrow W\sslash G.
\]
Then the regular map $[\mu^\prime]$ satisfies one of the following three conditions:
\begin{itemize}

\item $[\mu^\prime]_{reg}$ is a constant map and maps to a $T$-fixed point of $W\sslash G$, in this case, we call $C^\prime$ a contracted component;

\item there are no base points (i.e. $[\mu^\prime]_{reg}=[\mu^\prime]$) and it is a cover of a $1$-dimensional orbit of the $T$-action on $W\sslash G$, totally ramified over the two fixed points of the orbit;

\item $[\mu^\prime]_{reg}$ is a ramified cover of an $1$-dimensional orbit as in the second case, but $[\mu^\prime]$ has a base-point at one of the fixed point and a node at the other fixed point.

\end{itemize}

Let $M$ be a connected component of the fixed locus $Q^\epsilon_{0,n+2}(W\sslash G,d)^T$. Following \cite{CK}, $M$ is of {\em initial type} if the first marking is on a contracted irreducible component of the domain curve, of {\em recursion type} if the first marking is on a non-contracted irreducible component.

\begin{lemma}[Poles of $\beta^*S^\epsilon_{t,T}$]\label{lem:pole}
The restriction $\beta^*S^\epsilon_{t,T}$ is a rational function of $q$ with possibly poles only at $0$, $\infty$, roots of unity and at most simple poles at $q=(\lambda(\beta,\mu)^{-1/m})$, where $\lambda(\beta,\mu)$ is the character of the torus action on the tangent line at the fixed point $\mu$ corresponding to the $1$-dimensional orbit connecting the fixed points $\beta$ and $\mu$; for some $m=1,2,\ldots$.

\end{lemma}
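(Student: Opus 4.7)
The approach is to apply $T$-equivariant virtual $K$-theoretic localization (Atiyah--Bott, or equivalently virtual Kawasaki--Riemann--Roch) to each summand of $\beta^{*}S^{\epsilon}_{t,T}(q)(\gamma)$, writing it as a sum of contributions indexed by the connected components $M$ of the fixed locus $Q^{\epsilon}_{0,n+2}(W\sslash G,d)^{T}$. The inputs at the markings $2,\ldots,n+2$, the insertion $\gamma$, the virtual structure sheaf, and the virtual normal bundle factor $\operatorname{tr}\wedge^{\bullet}N_{M}^{\vee,\mathrm{vir}}$ are all independent of $q$; the only $q$-dependence lies in the factor $\frac{1}{1-qL_{1}}$ at the first marked point. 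Thus the pole analysis reduces to understanding the $T$-character of $L_{1}|_{M}$ for each $M$.

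Following the classification recalled before the lemma, each $M$ is either of initial or of recursion type. In the recursion case, the first marking lies on a non-contracted rational component $C'$ that is a degree-$m$ cover of the $1$-dimensional orbit joining $\beta$ to an adjacent $T$-fixed point $\mu$, totally ramified over the two endpoints, with the marking lying over $\beta$. Since the tangent characters at the two endpoints of a $T$-invariant $\mathbb{P}^{1}$ are inverse to one another, the tangent character at $\beta$ to the orbit equals $\lambda(\beta,\mu)^{-1}$, and pulling back through the totally ramified degree-$m$ cover shows that the tangent character to $C'$ at the marking equals $\lambda(\beta,\mu)^{-1/m}$. Hence $L_{1}|_{M}$ is a rigid line bundle with $T$-character $\lambda(\beta,\mu)^{1/m}$, and the factor $\frac{1}{1-qL_{1}}$ contributes precisely the simple pole at $q=\lambda(\beta,\mu)^{-1/m}$ asserted in the lemma.

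In the initial case, the first marking lies on a contracted rational component mapping to $\beta$, and $L_{1}|_{M}$ has trivial $T$-character but is typically non-trivial as a line bundle on $M$. The contribution $\chi\!\left(M,\,(\,\cdot\,)/(1-qL_{1})\right)$ is analyzed via the virtual Kawasaki--Riemann--Roch formula: it decomposes into contributions from cyclotomic inertia strata of $M$ on which $L_{1}$ acts by roots of unity, so the resulting rational function of $q$ can have poles only at roots of unity. The poles at $q=0$ and $q=\infty$ are accounted for by the Laurent expansions of $(1-qL_{1})^{-1}$.

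The main obstacle is the initial-type analysis: one must verify that although $L_{1}|_{M}$ is a genuinely non-trivial line bundle on a positive-dimensional moduli space, no $q$-poles arise beyond roots of unity. This rests on the fact that the $T$-action on $L_{1}|_{M}$ is trivial, so all extraneous $q$-poles can appear only through the cyclotomic structure built into virtual Kawasaki--Riemann--Roch, which by construction forces such poles to be roots of unity. One must also check that the $q$-independent virtual-normal-bundle factors do not worsen the order of the recursion pole; since these factors involve only the $T$-characters of $N_{M}^{\mathrm{vir}}$ and not $q$, this is automatic, and the recursion poles remain simple. Combining the two cases yields the claimed pole structure of $\beta^{*}S^{\epsilon}_{t,T}$.
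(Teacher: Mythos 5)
Your proposal is correct and follows essentially the same route as the paper: localize, isolate the $q$-dependence in the factor $\frac{1}{1-qL_1}$, and treat initial and recursion components separately, with the recursion poles coming from the character $\lambda(\beta,\mu)^{1/m}$ of the rigid cotangent line over $\beta$. Your appeal to virtual Kawasaki--Riemann--Roch on cyclotomic inertia strata for the initial type is just the stacky reformulation of what the paper does explicitly --- namely, that on the vertex factor (a fiber product over $\overline{M}_{0,\mathrm{val}(\beta)+k}$, a finite-dimensional space) the line $L$ is unipotent, so $\operatorname{tr}_h\bigl(\frac{1}{1-qL}\bigr)=\frac{1}{1-q\xi\widetilde{L}}$ expands as a finite sum of terms $\frac{(q\xi)^i(\widetilde{L}-1)^i}{(1-q\xi)^{i+1}}$ with $\xi$ an eigenvalue of $h\in S_n$; it would strengthen your write-up to state this unipotency explicitly, since triviality of the $T$-character alone does not rule out non-cyclotomic poles.
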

\begin{proof}

We apply virtual localization to the sum 
\[
\sum_\alpha Q^d\langle \frac{\phi_\alpha}{1-qL},\gamma,t,\ldots,t\rangle^{\epsilon,S_n, T}_{0,n+1,d}g^{\alpha\beta}
\]
Note that we have the formal expansion
\[
\frac{1}{1-xL}=\sum\limits_{i\geq 0}\frac{x^i}{(1-x)^{i+1}}(L-1)^i.
\]

For each initial component $M$ with the first marking lying over $\beta$, we claim that it contributes polynomials of $\frac{1}{1-\xi q}$, where $\xi$ is a root of unity:

The vertex factor of the initial component $M$ that corresponds to the fixed point $\beta$ can be written as the fiber product
\[
\left(\left(\overline{Q}_{0,val(\beta)+k}(W\sslash G,0)^T\times_{(W\sslash G)^{T}}\overline{Q}_{0,1}(W\sslash G,d_1)_{\infty}^T\right)\times \cdots \right)\times_{(W\sslash G)^{T}}\overline{Q}_{0,1}(W\sslash G,d_k)_{\infty}^T,
\]
where the moduli space $\overline{Q}_{0,1}(W\sslash G,d_i)_{\infty}$ parametrizes the quasimaps $(\mathbb P^1,P,u)$ of class $d_i$ with a principal $G$-bundle $P$ on $\mathbb P^1$, a section $u:\mathbb P^1\rightarrow P\times_G W$ such that 
$u(x)\in W^s$ for $x\neq \infty \in \mathbb P^1$ and $\infty \in \mathbb P^1$ is a base-point of length $d_i(L_\theta)$.
We also have 
\[
\overline{Q}_{0,val(\beta)+k}(W\sslash G,0)^T=\overline{M}_{0,val(\beta)+k}
\]
is a finite dimensional complex manifold, hence $L$ restricts to a unipotent element and the trace $\on{tr}_h(\frac{1}{1-qL})=\frac{1}{1-q\xi \tilde L}$, where $h\in S_n$, $\xi$ is the eigenvalue of $h$ on $L$ and $\tilde L$ is the restriction of $L$ to the fixed point locus of $h$.

For each recursion component $M$ with the first marking lies over $\beta$, then the restriction of $L$ to $M$ is $\lambda(\beta,\mu)^{1/m}$. Hence $\beta^*S^\epsilon_{t,T}$ has simple poles at $q=\lambda(\beta,\mu)^{-1/m}$.
\end{proof}

\begin{lemma}[Recursion Relation]\label{lem:rec}
The restriction $\beta^*S^\epsilon_{t,T}$ of $S^\epsilon_{t,T}$ to the fixed point $\beta$ satisfies the recursion relation
\begin{equation}\label{recursion-relation}
\beta^*S^\epsilon_{t,T}(q)=I^\epsilon_\beta (q)+\sum_{\mu\in o(\beta)}\sum_{m=1}^{\infty} \frac{Q^{md(\beta,\mu)}}{m}\frac{\phi^\beta}{C_{\beta,\mu,m}}\frac{1}{1-\lambda(\beta,\mu)^{1/m}q} \mu^*S^\epsilon_{t,T}(\lambda(\beta,\mu)^{1/m})
\end{equation}
where 
\begin{enumerate}
\item
$I^\epsilon_\beta (q)$ is the sum of the contribution of all the components of initial type. Each $Q, \{t_i\}$ coefficient of $I^\epsilon_\beta(q)$ is of the form 
\[
\sum_{\xi: \text {root of unity}}\sum_{i\geq 0} c_{i,\xi}(\xi q)^i/(1-\xi q)^{i+1}.
\]
\item
$o(\beta)$ is the set of all fixed points $\mu$ connected to $\beta$ via a $1$-dimensional orbit, $d(\beta,\mu)$ is the homology class of the orbit and $\lambda(\beta,\mu)$ is the character of the torus representation on the tangent line at $\beta$ corresponding to the orbit. 
\item
The recursion coefficient $C_{\beta,\mu,m}$ is the $T$-equivariant $K$-theoretic Euler class of the virtual cotangent space to the moduli space $\overline{M}_{0,2}(W\sslash G,m)$ at the corresponding fixed point and this recursion coefficient does not depend on $\epsilon$.
\end{enumerate}
\end{lemma}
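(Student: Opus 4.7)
The plan is to apply $T$-equivariant virtual localization directly to the defining expression
\[
\beta^*S^\epsilon_{t,T}(q)=\sum_{\alpha}\left(\langle \phi_\alpha,\gamma\rangle+\sum_{(n,d)\neq (0,0)}Q^d\langle \tfrac{\phi_\alpha}{1-qL},\gamma,t,\ldots,t\rangle^{\epsilon,S_n,T}_{0,n+2,d}\right)g^{\alpha\beta}
\]
and to decompose the sum over connected components of the $T$-fixed locus $Q^\epsilon_{0,n+2}(W\sslash G,d)^T$ according to the initial/recursion dichotomy recalled before Lemma \ref{lem:pole}. The insertion $\phi_\alpha$ at the first marking, together with the pairing with $g^{\alpha\beta}$, forces the first marking to lie over the fixed point $\beta$, so every contributing fixed locus has its first marking attached (via a chain of contracted rational components or directly) to a rational component whose $[\mu']_{reg}$ equals $\beta$.

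I would first collect the initial-type contributions into $I^\epsilon_\beta(q)$. On such a fixed locus $M$, the first marking sits on a component on which $u$ is constant, $[\mu']_{reg}\equiv\beta$, and the restriction of the universal cotangent line $L$ to $M$ is the (unipotent) cotangent line of the underlying moduli $\overline{M}_{0,\mathrm{val}(\beta)+k}$ (or a suitable quasimap vertex moduli at $\beta$), as in the proof of Lemma \ref{lem:pole}. Expanding $\tfrac{1}{1-qL}=\sum_{i\geq 0}\tfrac{q^i}{(1-q)^{i+1}}(L-1)^i$ and applying the usual trace formula for the $S_n$-action (for which the eigenvalues of $L$ on $h$-fixed components are roots of unity $\xi$) yields precisely the form stated in (1), with coefficients $c_{i,\xi}$ that are finite-dimensional $T$-equivariant holomorphic Euler characteristics. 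This step is mechanical bookkeeping once the virtual localization formula is in place.

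Next I would treat the recursion-type loci. Here the first marking lies on an irreducible component $C'$ which is a degree-$m$ cover of the $1$-dimensional orbit from $\beta$ to some $\mu\in o(\beta)$, totally ramified over the two endpoints; this contributes the class $m\cdot d(\beta,\mu)$ and an automorphism factor $1/m$. The fixed-locus contribution splits, by the vertex-edge-vertex decomposition of \cite{CK}, as the product of: an edge factor from the cover $C'$, which produces the universal pole $\tfrac{1}{1-\lambda(\beta,\mu)^{1/m}q}$ (because $L$ restricted to the first marking equals the character $\lambda(\beta,\mu)^{1/m}$ of the tangent line at $\beta$ for the degree-$m$ cover) together with the recursion coefficient $\tfrac{1}{C_{\beta,\mu,m}}$ recognized as the inverse equivariant $K$-theoretic Euler class of the virtual cotangent space to $\overline{M}_{0,2}(W\sslash G,m)$ at this cover; and a ``tail'' factor obtained by gluing the remaining components at the node over $\mu$. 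The tail factor, summed over all ways of distributing the remaining insertions $\gamma,t,\ldots,t$, the remaining degree $d-md(\beta,\mu)$, and over all recursive and initial-type subconfigurations emanating from $\mu$, reassembles term-by-term into $\mu^*S^\epsilon_{t,T}$ evaluated at the quantization $q\rightsquigarrow\lambda(\beta,\mu)^{1/m}$; this identification uses the splitting of the virtual structure sheaf at a node, together with the fact that the $\psi$-class/tautological line bundle $L$ at the node glues to $\lambda(\beta,\mu)^{1/m}$.

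The main obstacle will be the tail-reassembly step: one must check that the combinatorial sum of all fixed loci with a prescribed recursion edge $(\beta,\mu,m)$ genuinely reproduces the operator $\mu^*S^\epsilon_{t,T}$ evaluated at $\lambda(\beta,\mu)^{1/m}$, including the correct treatment of the permutation action $S_n$ on the remaining insertions (which, because $t,\ldots,t$ are equal, is absorbed into the $S_n$-equivariant Euler characteristics defining the correlators) and the correct identification of $C_{\beta,\mu,m}$ as $\epsilon$-independent (since the cover $C'$ has no base points and the moduli $\overline{M}_{0,2}(W\sslash G,m)$ at the fixed cover is the same as the local model used in the Gromov--Witten case). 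Once these identifications are made, summing over $\mu\in o(\beta)$ and $m\geq 1$ produces the second term of \eqref{recursion-relation}, and combining with the initial-type contributions completes the proof.
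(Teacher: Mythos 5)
Your proposal follows essentially the same route as the paper: apply $T$-equivariant virtual localization to the correlators defining $\beta^*S^\epsilon_{t,T}$, sort the fixed loci into initial and recursion types, extract the pole $\tfrac{1}{1-\lambda(\beta,\mu)^{1/m}q}$ and the $\epsilon$-independent coefficient $C_{\beta,\mu,m}$ from the degree-$m$ edge, and reassemble the configurations attached at $\mu$ into $\mu^*S^\epsilon_{t,T}$ (the paper just makes this last step slightly more explicit by separating the trivial tail, contributing $\sum_\nu\langle\phi_\nu,\gamma\rangle g^{\nu\mu}$, from the nontrivial subgraphs). The argument is correct and matches the paper's.
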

\begin{proof}
We again apply virtual localization to the summand 
\[
\sum_\alpha Q^d\langle \frac{\phi_\alpha}{1-qL},\gamma,t,\ldots,t\rangle^{\epsilon,S_n, T}_{0,n+1,d}g^{\alpha\beta}.
\]
Consider fixed components of recursion type,
there are two possibilities when the component has contributions to the pole at $q=\lambda(\beta,\mu)^{-1/m}$:

The first possibility is that $M$ is an one-dimensional orbit with multiplicity $m$ connecting the the fixed points $\beta, \mu$ and has a marked point at the other end. It is an isolated fixed point $p$ in $\overline{M}_{0,2}(W\sslash G, m)$. Note that this component does not dependent on the choose of $\epsilon$. The contribution is
\[
(\sum_\alpha \phi_\alpha g^{\alpha\beta})\frac{1}{1-\lambda(\beta,\mu)^{1/m}q}\frac{Q^{md(\beta,\mu)}}{m C_{\beta,\mu,m}}\sum_\nu \langle \phi_\nu,\gamma\rangle g^{\nu\mu}
\]
where the recursion coefficient 
\begin{equation}\label{rec-coeff}
C_{\beta,\mu,m}=\on{Euler}_T^K(T^*_p \overline{M}_{0,2}(W\sslash G,m))
\end{equation} 
is the $T$-equivariant $K$-theoretic Euler class of the virtual cotangent space to the moduli space at the point $p$ corresponding to the $m$-multiple cover of the $1$-dimensional orbit connecting fixed points $\beta$ and $\mu$. Hence $C_{\beta,\mu,m}$ does not dependent on the choose of $\epsilon$.

The second possibility is $M$ has an one-dimensional orbit connecting $\beta,\mu$ and a subgraph $M^\prime$ attached to $\mu$, i.e. the first marking of $M^\prime$ lies over $\mu$. The contribution is
\[
\frac{\sum_\alpha \phi_\alpha g^{\alpha\beta}}{1-\lambda(\beta,\mu)^{1/m}q}\frac{Q^{md(\beta,\mu)}}{mC_{\beta,\mu,m}}(\mu^* S^\epsilon_{t,T}(t,\lambda(\beta,\mu)^{-1/m})-\sum_\nu \langle \phi_\nu,\gamma\rangle g^{\nu\mu}).
\]
Finally, the polynomiality of the coefficients of $I^\epsilon_\beta(q)$ follows from unipotency of $L$.

This completes the proof.
\end{proof}

\begin{remark}
Write
\[
P^\epsilon(t,q)=P^\epsilon(t,(\lambda(\beta,\mu))^{-1/m})+(1-\lambda(\beta,\mu)^{1/m}q)A^\epsilon_{\mu,m}(q),
\]
where $A^\epsilon_{\mu,m}(q)$ is a power series in $(1-q)$. Applying localization as in the previous lemma, we have the following recursion relation for $S^\epsilon_t(q)(P^\epsilon(t,q))$:
\begin{align*}
& \beta^*S^\epsilon_t(q)(P^\epsilon(t,q))\\
& =\tilde{I}^\epsilon_\beta(q)+\sum_{\mu\in o(\beta)}\sum_{m=1}^{\infty} \frac{Q^{md(\beta,\mu)}}{m}\frac{\phi^\beta}{C_{\beta,\mu,m}}\frac{1}{1-\lambda(\beta,\mu)^{1/m}q} \mu^*S^\epsilon_{t,T}(\lambda(\beta,\mu)^{1/m})(P^\epsilon(t,q))\\
&= I^\epsilon_\beta(q)+\sum_{\mu\in o(\beta)}\sum_{m=1}^{\infty} \frac{Q^{md(\beta,\mu)}}{m}\frac{\phi^\beta}{C_{\beta,\mu,m}}\frac{1}{1-\lambda(\beta,\mu)^{1/m}q} \mu^*S^\epsilon_{t,T}(\lambda(\beta,\mu)^{1/m})(P^\epsilon(t,\lambda(\beta,\mu)^{-1/m}))
\end{align*}
where $\tilde{I}^\epsilon_\beta(q)$ is the summation of the contribution of all the components of initial type and
\[
I^\epsilon_\beta(q):=\tilde{I}^\epsilon_\beta(q)+\sum_{\mu\in o(\beta)}\sum_{m=1}^{\infty} \frac{Q^{md(\beta,\mu)}}{m}\frac{\phi^\beta}{C_{\beta,\mu,m}} \mu^*S^\epsilon_{t,T}(\lambda(\beta,\mu)^{1/m})(A^\epsilon_{\mu,m}(q))
\]
The recursion relation for $S^\infty_{\tau^{\infty,\epsilon}(t)}(q)(P^{\infty,\epsilon}(\tau^{\infty,\epsilon}(t),q))$ works in the same way.
\end{remark}

%%%%%%%%%%%%%%%%%%%%%%%%%%%%%%%%%%%%%%
\subsection{Main Results}
This section, we state and prove the main theorems of this paper.
\begin{theorem}\label{main-theorem-1}
Assume the torus $T$ action on $W\sslash G$ has isolated fixed points and isolated $1$-dimensional orbits. Let $0+\leq \epsilon_1 < \epsilon_2\leq \infty$, $\gamma\in K^0_{T}(W\sslash G)\otimes \Lambda$ is of the form $\mathds 1+O(Q)$. Then 
\[
S^{\epsilon_1}_{\tau_{\gamma}^{\epsilon_1,\epsilon_2}(t)}(q)(\gamma)=S^{\epsilon_2}_t(q)(\gamma).
\]
\end{theorem}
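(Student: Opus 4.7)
The plan is to combine torus localization with the recursion of Lemma \ref{lem:rec}, and proceed by induction on the Novikov degree. Because the $T$-action on $W\sslash G$ has isolated fixed points, a $T$-equivariant class is determined by its restrictions $\beta^{*}$ at the fixed points, so it suffices to prove
$$\beta^{*} S^{\epsilon_1}_{\tau^{\epsilon_1,\epsilon_2}_\gamma(t),T}(q)(\gamma)=\beta^{*} S^{\epsilon_2}_{t,T}(q)(\gamma)$$
for every $\beta\in(W\sslash G)^T$. The base case modulo $Q$ is immediate from the expansion $S^\epsilon(q)(\gamma)\equiv \gamma \pmod{Q}$, together with $\gamma\equiv \mathds{1}\pmod Q$.

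For the inductive step, assume the equality holds modulo $Q^N$ at every fixed point. Decompose each side via Lemma \ref{lem:rec} into an initial-type contribution $I^\epsilon_\beta(q)$ and a recursive sum over pairs $(\mu,m)$. Since the recursion coefficient $C_{\beta,\mu,m}$ is $\epsilon$-independent and each recursive summand carries a prefactor $Q^{md(\beta,\mu)}$ with $md(\beta,\mu)\geq 1$, the induction hypothesis applied to $\mu^{*} S^\epsilon$ at the relevant lower degrees shows that the recursive parts of the two sides agree modulo $Q^{N+1}$. Thus the residual difference $\Delta_{N+1}(q)$ modulo $Q^{N+1}$ equals $I^{\epsilon_1}_\beta(q)-I^{\epsilon_2}_\beta(q) \pmod{Q^{N+1}}$, which by Lemma \ref{lem:rec}(1) is a finite sum of terms $c_{i,\xi}(\xi q)^i/(1-\xi q)^{i+1}$ supported at roots of unity $\xi$. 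On the other hand, the discussion immediately preceding the theorem shows $\Delta_{N+1}(q)$ also has the form $\frac{1}{(1-q)^2}\,p_{\epsilon_1,\epsilon_2}(\tfrac{q}{1-q})$, whose only pole is at $q=1$. Comparing these two descriptions forces $c_{i,\xi}=0$ for every root of unity $\xi\neq 1$, and the $\frac{1}{1-q}$ coefficient of $\Delta_{N+1}$ vanishes automatically by the very definition $\tau^{\epsilon_1,\epsilon_2}_\gamma=(\tau^{\epsilon_1}_\gamma)^{-1}\circ \tau^{\epsilon_2}_\gamma$, which was engineered to align precisely this linearization term.

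The main obstacle is ruling out the remaining higher-order $\frac{q^i}{(1-q)^{i+1}}$ contributions for $i\geq 1$. For this I will invoke Lemma \ref{lem:difference} together with Remark \ref{rmk:cone}: both $S^{\epsilon_j}_t(q)(P^{\epsilon_j}(t,q))$ can be matched, up to an explicit polynomial in $q/(1-q)$, to $S^\infty_{\tau^{\infty,\epsilon_j}(t)}(q)(P^{\infty,\epsilon_j}(\tau^{\infty,\epsilon_j}(t),q))$, which by Remark \ref{rmk:cone} lies on the Lagrangian cone $\mathcal{L}_{S_\infty,W\sslash G}$. Combined with the identity $\mathcal{J}^\epsilon=S^\epsilon(P^\epsilon)$ from Proposition \ref{prop:SP}, this places the two sides of the claimed equality (after application to $\gamma=\mathds{1}+O(Q)$) into a family of points on $\mathcal{L}_{S_\infty,W\sslash G}$ whose recursive pole data and $\frac{1}{1-q}$-coefficient already agree; the Lagrangian/cotangent-space rigidity of the cone then forces agreement of all the higher-order $(q/(1-q))^i$ terms, closing the induction. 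The technical heart of the argument is this rigidity step, while the rest reduces to routine bookkeeping with torus localization and $Q$-adic convergence.
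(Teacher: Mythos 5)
Your overall architecture---restricting to the $T$-fixed points, inducting on the Novikov degree, using the recursion of Lemma \ref{lem:rec} together with the $\epsilon$-independence of $C_{\beta,\mu,m}$ to reduce the inductive step to the difference of initial-type contributions $I^{\epsilon_1}_\beta-I^{\epsilon_2}_\beta$, and using Lemma \ref{lem:difference} to dispose of the $\frac{1}{1-q}$-coefficient---coincides with the paper's Uniqueness Lemma, and that part of your bookkeeping is sound.

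The gap is in your treatment of the remaining terms $c_{i,\xi}(\xi q)^i/(1-\xi q)^{i+1}$. The paper's essential input here is Lemma \ref{lem:poly}: for each fixed point $\beta$ the symmetrized product $D(S^\epsilon_\beta)=S^\epsilon_\beta(Q,t,q)\,S^\epsilon_\beta(Q(1/q)^{aL_\theta},t,1/q)$ has no pole at roots of unity, because it is realized by a graph-space generating series twisted by the universal line bundle $U(L_\theta)$, hence is defined without $\mathbb C^*$-localization, and its Adams transforms $\Psi^m$ handle the higher roots of unity. A surviving pole of order $n\ge 2$ at $q=1$ in the difference would produce a pole in $D$, which is the contradiction that closes the induction. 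You never invoke this lemma. Instead you appeal to ``Lagrangian/cotangent-space rigidity'' of $\mathcal L_{S_\infty,W\sslash G}$, and this substitute fails for three reasons. First, Theorem \ref{main-theorem-1} concerns $S^\epsilon_t(q)(\gamma)$ for an \emph{arbitrary} invertible $\gamma=\mathds 1+O(Q)$, whereas Proposition \ref{prop:SP} and Remark \ref{rmk:cone} only concern the particular input $\gamma=P^\epsilon$; your argument therefore cannot reach the stated generality. Second, the assertion that $\mathcal J^\epsilon=S^\epsilon(P^\epsilon)$ lies on the cone is Theorem \ref{main-theorem-2}, which the paper deduces \emph{from} Theorem \ref{main-theorem-1}, so invoking it here is circular. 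Third, even granting cone membership, the cone is swept out by a family of inputs, so two of its points sharing the same recursion data and $\frac{1}{1-q}$-coefficient need not coincide: what pins a point down is precisely its behavior at roots of unity, i.e.\ exactly the content of Lemma \ref{lem:poly} (the $K$-theoretic analogue of the cohomological polynomiality property of \cite{CK}). Relatedly, your claim that the $\xi\neq 1$ terms are killed by comparison with $\frac{1}{(1-q)^2}p_{\epsilon_1,\epsilon_2}(\tfrac{q}{1-q})$ overreads Lemma \ref{lem:difference}, which is an identity of formal re-expansions in $q/(1-q)$ and does not see poles away from $q=1$; the paper eliminates those terms with $\Psi^m(D(S^\epsilon_\beta))$ as well. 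Without Lemma \ref{lem:poly} or an equivalent geometric statement, the induction does not close.
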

\begin{theorem}\label{main-theorem-2}
Assume the torus $T$ action on $W\sslash G$ has isolated fixed points and isolated $1$-dimensional orbits, then for all $\epsilon \geq 0+$,
\[
\mathcal J^\epsilon(t,q)=S^\epsilon_t(q)(P^\epsilon(t,q))=S^\infty_{\tau^{\infty,\epsilon}(t)}(q)(P^{\infty,\epsilon}(\tau^{\infty,\epsilon}(t),q)),
\]
hence, lies on $\mathcal L_{S_\infty,W\sslash G}$, the Lagrangian cone of the permutation-equivariant  $K$-theoretic Gromov-Witten theory of $W\sslash G$.
\end{theorem}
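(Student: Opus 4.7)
The plan is to dispatch the first equality using Proposition \ref{prop:SP}, deduce cone-membership from Remark \ref{rmk:cone}, and concentrate the work on the middle equality
\[
S^\epsilon_t(q)(P^\epsilon(t,q)) = S^\infty_{\tau^{\infty,\epsilon}(t)}(q)(P^{\infty,\epsilon}(\tau^{\infty,\epsilon}(t),q)).
\]
For cone-membership, once the middle equality holds, expanding $P^{\infty,\epsilon}(\tau,q) = \sum_i C_i(Q,\{t_i\},q)\phi_i$ gives $S^\infty_\tau(q)(P^{\infty,\epsilon}(\tau,q)) = \sum_i C_i \cdot (1-q)\partial_{t_i}\mathcal J^\infty(\tau,q)$, which sits on $\mathcal L_{S_\infty,W\sslash G}$ by the tangent-vector-to-the-cone characterization used in Remark \ref{rmk:cone}. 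The middle equality itself I would attack by $T$-equivariant fixed-point localization combined with an induction on Novikov degree.

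First, restrict both sides to each $T$-fixed point $\beta \in (W\sslash G)^T$. By Lemma \ref{lem:pole} the restrictions are rational in $q$ with poles only at $0$, $\infty$, roots of unity, and the points $\lambda(\beta,\mu)^{-1/m}$; by Lemma \ref{lem:rec} together with the remark after it, both restrictions satisfy the \emph{same} recursion relation with $\epsilon$-independent coefficients $C_{\beta,\mu,m}$. Setting $D := S^\epsilon_t(q)(P^\epsilon(t,q)) - S^\infty_{\tau}(q)(P^{\infty,\epsilon}(\tau,q))$ for $\tau = \tau^{\infty,\epsilon}(t)$, Lemma \ref{lem:difference} forces the global form $D = \frac{1}{(1-q)^2}\, p_{\epsilon,\infty}(q/(1-q))$, which is rational in $q$ with possible poles only at $q = 1$.

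I then run the induction on Novikov degree $d$. The base $d=0$ is handled by the degree-zero content of Lemma \ref{lem:difference} ($\tau \equiv t$, $P^{\infty,\epsilon} \equiv \mathds 1$ mod $Q$) combined with $P^\epsilon \equiv \mathds 1$ mod $Q$ and $S^\epsilon_t \equiv S^\infty_t$ mod $Q$. For the inductive step, the common recursion makes the residue of $\beta^* D$ at each $q = \lambda(\beta,\mu)^{-1/m}$ proportional to $Q^{m d(\beta,\mu)} \cdot \mu^* D(\lambda(\beta,\mu)^{-1/m})$, which vanishes at degree $d$ by the inductive hypothesis. So the degree-$d$ piece of $\beta^* D$ has no recursive poles, and combined with the Lemma \ref{lem:difference} form its only possible pole is at $q = 1$. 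Since $\tau^{\infty,\epsilon}$ and $P^{\infty,\epsilon}$ were built inductively in Lemma \ref{lem:difference} precisely to absorb the initial-type pole contributions of Lemma \ref{lem:rec}(1), and since the regularity-at-$q = 1$ input underlying the unitarity of Proposition \ref{prop:unitary} kills any spurious $q = 1$ pole of a graph-space generating series, the degree-$d$ piece of $\beta^* D$ must vanish, closing the induction.

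The hard part is the $q = 1$ analysis: this point is simultaneously the unique possible pole locus of the Lemma \ref{lem:difference} difference form and a root of unity at which initial-type poles accumulate. The inductive construction of $\tau^{\infty,\epsilon}$ and $P^{\infty,\epsilon}$ has to be coupled in lockstep with the inductive vanishing of $D$, so that at each new degree the freshly-contributed initial-type terms on the two sides cancel, leaving only the recursive part to be killed by the residue argument. Transferring this cancellation faithfully into the permutation-equivariant $K$-theoretic setting---where Adams operations act nontrivially at roots of unity---is the main technical obstacle, and constitutes the $K$-theoretic analogue of the $T$-localization wall-crossing arguments of \cite{CK} and the toric case of \cite{Givental}.
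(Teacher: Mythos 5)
Your overall architecture coincides with the paper's: the first equality is Proposition \ref{prop:SP}, cone membership is the argument of Remark \ref{rmk:cone}, and the middle equality is obtained by restricting to $T$-fixed points and running the recursion/uniqueness machinery --- i.e.\ verifying for the two systems $\beta^*S^\epsilon_t(q)(P^\epsilon(t,q))$ and $\beta^*S^\infty_{\tau^{\infty,\epsilon}(t)}(q)(P^{\infty,\epsilon}(\tau^{\infty,\epsilon}(t),q))$ the hypotheses of the paper's Uniqueness Lemma (pole structure from Lemma \ref{lem:pole}, the common recursion with $\epsilon$-independent coefficients $C_{\beta,\mu,m}$ from Lemma \ref{lem:rec} and the remark following it, the difference form from Lemma \ref{lem:difference}, and agreement mod $Q$). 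Your induction with the recursive poles at $q=\lambda(\beta,\mu)^{-1/m}$ killed by the inductive hypothesis is exactly the content of that lemma.

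The gap is in how you close the induction at $q=1$ and at the other roots of unity. You assert vanishing of the degree-$d$ piece of $\beta^*D$ because (a) $\tau^{\infty,\epsilon}$ and $P^{\infty,\epsilon}$ ``absorb the initial-type contributions'' and (b) the regularity underlying Proposition \ref{prop:unitary} kills the remaining pole. Neither does the job: Lemma \ref{lem:difference} only guarantees the shape $\frac{1}{(1-q)^2}p_{\epsilon,\infty}\left(\frac{q}{1-q}\right)$ for the difference --- it does not make it zero --- and Proposition \ref{prop:unitary} concerns $(S^\epsilon_t)^*\circ S^\epsilon_t$, not the quantity at hand. The missing ingredient is Lemma \ref{lem:poly} (property (3) of the Uniqueness Lemma), extended to the $S(P)$ series by the remark following it: the product $S_\beta(Q,t,q)\,S_\beta(Q(1/q)^{aL_\theta},t,1/q)$ is computed by a graph-space generating series twisted by $U_{d_1,d_2}(L_\theta)^a$ which is defined \emph{without} $\mathbb C^*$-localization, hence is regular at $q=1$, and applying the Adams operations $\Psi^m$ yields regularity at all $m$-th roots of unity. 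The actual mechanism for vanishing is then the pole-order bookkeeping of the Uniqueness Lemma: in the lowest (lexicographic) bidegree where the two sides could differ, a nonzero difference with $\xi=1$ part $\left(\frac{1}{1-q}\right)^n(Aq^{n-2}+O(1-q))$, $n\ge 2$, would force the corresponding part of $D$ to be $\left(\frac{1}{1-q}\right)^n(Aq^{n-2}+(-1)^nAq^2+O(1-q))$, producing a pole at $q=1$ and contradicting Lemma \ref{lem:poly}. Without this step your argument establishes only that $\beta^*D$ is a difference of the allowed form with poles confined to $q=1$, not that it vanishes. Note also that the paper inducts on the bidegree $\left(\sum_i k_i,\, d(L_\theta)\right)$ in lexicographic order rather than on Novikov degree alone, which is what makes the cross-terms in the expansion of $D$ tractable.
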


Theorem \ref{main-theorem-2} is a consequence of Theorem \ref{main-theorem-1} and the arguments in Remark \ref{rmk:cone}.

\begin{remark}
Certainly, it is conjectured that the statements in Theorems \ref{main-theorem-1} and \ref{main-theorem-2} hold for $W\sslash G$ without torus actions.
\end{remark}

\begin{lemma}\label{lem:poly}
For each torus fixed point $\beta\in (W\sslash G)^T$, the series
\[
D(S^\epsilon_\beta):=S^\epsilon_\beta(Q,t,q)S^\epsilon_\beta(Q(1/q)^{aL_\theta},t,1/q)
\]
has no pole at roots of unity, where $a$ varies in integers and $(Q(1/q)^{aL_\theta})^d=Q^d (1/q)^{a d(L_\theta)}$.
\end{lemma}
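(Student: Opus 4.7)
The plan is to interpret $D(S^\epsilon_\beta)$ geometrically via virtual $\C^*$-localization on a two-pointed $\epsilon$-stable quasimap graph space, following the blueprint of the unitarity argument in the proof of Proposition \ref{prop:unitary}. More precisely, I would consider the $T$-equivariant generating series
\[
\sum_{n\geq 0,\, d\geq 0} Q^d\, \big\langle \phi_\beta(1-p_0),\, t,\ldots,t,\, \phi^\beta(1-p_\infty)\big\rangle^{QG^\epsilon, S_n}_{0,n+2,d},
\]
restricted to the fixed point $\beta$ via $T$-localization and additionally twisted by the equivariant power $\mathcal L_\theta^a$ of the polarization line bundle, lifted so as to be trivial over $0 \in \P^1$ and of weight $-a L_\theta$ over $\infty \in \P^1$. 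The insertions $(1-p_0)$ and $(1-p_\infty)$ vanish respectively on $\C^*$-fixed loci lying over $\infty$ and over $0$, so by $\C^*$-localization only the ``split'' contributions survive, in which one marking is carried by a contracted subtree attached to $0$ and the other by a contracted subtree attached to $\infty$.

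Next, I would observe that this graph-space generating series is a Laurent polynomial in the $\C^*$-parameter $q$. Indeed, $QG^\epsilon/S_n$ is proper over the affine quotient $\on{Spec}(A^G)$, the virtual structure sheaf twisted by the inserted classes is coherent, and so its $T \times \C^*$-equivariant holomorphic Euler characteristic, localized at $\beta$, is a Laurent polynomial in $q$. In particular, it has no poles at roots of unity.

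Finally, I would unwind the $\C^*$-localization computation exactly along the lines of Proposition \ref{prop:unitary}: the split contributions factor as a product of two copies of $\beta^*S^\epsilon_{t,T}$, one evaluated at $q$ (from the tangent weight of $\C^*$ at $0$) and the other at $1/q$ (from the tangent weight at $\infty$); the degree splits as $d = d_0 + d_\infty$, and the equivariant twist by $\mathcal L_\theta^a$ introduces a factor $(1/q)^{a\, d_\infty(L_\theta)}$ on the $\infty$-factor. Summing over $(d_0, d_\infty)$ reproduces $D(S^\epsilon_\beta)$ as defined in the lemma; combined with the previous step, this proves the claimed regularity.

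The hardest step will be matching the equivariant twist by $\mathcal L_\theta^a$ precisely with the Novikov shift $Q\mapsto Q(1/q)^{aL_\theta}$, which requires careful bookkeeping of how the $T$-equivariant polarization couples to the $\C^*$-weights on the two sides of the $\C^*$-fixed locus of $QG^\epsilon$, and of how the equivariant lift propagates through the virtual normal bundle in the localization formula. A secondary technical concern is the treatment of the unstable $(n=0,\, d\neq 0,\, d(L_\theta)\leq 1/\epsilon)$ contributions, which must be interpreted via the $F_0$-prescription used earlier in the paper and verified to respect the factorization so that no spurious poles at roots of unity are introduced.
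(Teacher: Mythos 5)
Your geometric setup is essentially the paper's: the proof there also realizes $D(S^\epsilon_\beta)$ (up to the constant $\lambda(\mathcal O(\theta);\beta)^a$) as the $\mathbb C^*$-localization of a two-pointed graph-space generating series $\llangle \gamma(1-p_0),\gamma(1-p_\infty);U(L_\theta)\rrangle_{0,2;\beta}^{QG,\epsilon}$, with the Novikov shift $Q\mapsto Q(1/q)^{aL_\theta}$ produced by inserting the $a$-th power of the universal $\mathbb C^*$-equivariant line bundle $U_{d_1,d_2}(L_\theta)$ of \cite[Section 3.3]{CK}; your two flagged ``hard steps'' are exactly what that insertion and the $F_0$-prescription take care of. The gap is in your middle step. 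You assert that the series, ``localized at $\beta$,'' is a Laurent polynomial in $q$ and hence regular at all roots of unity. First, the $T$-residue at $\beta$ divides by $\on{Euler}^K_T(N^{\mathrm{vir}})$, so the single-fixed-point contribution is not a Laurent polynomial in $q$ (only the sum over all $T$-fixed loci is). Second, and more seriously, the argument ``defined without $\mathbb C^*$-localization'' only rules out a pole at $q=1$: in the permutation-equivariant theory the poles of $S^\epsilon_\beta$ at roots of unity $\xi\neq 1$ come from the $S_n$-symmetrization, i.e.\ from traces $\on{tr}_h\bigl(\tfrac{1}{1-qL}\bigr)=\tfrac{1}{1-q\xi\widetilde L}$ over the $h$-fixed (Kawasaki) strata, as in the proof of Lemma \ref{lem:pole}, and a properness/coherence argument at the untwisted level does not see their cancellation in the product $D(S^\epsilon_\beta)$.

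The paper closes this gap with the Adams operations: for each positive integer $m$ one checks that $\Psi^m\bigl(D(S^\epsilon_\beta)\bigr)$ (with $\Psi^m$ extended from $\Lambda$ by $\Psi^m(q)=q^m$) is again expressible as an index defined without $\mathbb C^*$-localization, hence regular at $q=1$, and this is what forces regularity of $D(S^\epsilon_\beta)$ at the $m$-th roots of unity. This is the standard mechanism from Givental's permutation-equivariant quantum $K$-theory for controlling the twisted-sector poles, and it is the one genuinely new ingredient of the proof beyond the cohomological argument of \cite{CK}; your proposal omits it entirely, so as written it proves regularity only at $q=1$.
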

\begin{proof}

Given a fixed point $\beta\in (W\sslash G)^T$, we write $QG^\epsilon_{0,m+2,d}(W\sslash G)_\beta$ for the $T$-fixed locus parametrizes quasimaps with the parametrized $\mathbb P^1$ contracted to the point $\beta$. Let $\kappa$ be the inclusion map into the graph space and put
\[
\on{Res}_\mu\mathcal O_{QG^\epsilon_{0,m+2,d}(W\sslash G)}^{\text{vir}}:=\kappa_*\left( \frac{\mathcal O_{QG^\epsilon_{0,m+2,d}(W\sslash G)_\beta}^{\text{vir}}}{\on{Euler}^K_T(N^{\text{vir}})}\right)
\]
the $T$-equivariant residue at the fixed locus $QG^\epsilon_{0,m+2,d}(W\sslash G)_\beta$. Furthermore, we write
\[
\gamma=\sum_d Q^d\gamma_d
\]
where $\gamma_d\in K^0_{T}(W\sslash G)\otimes \mathbb Q$. Consider the generating series
\begin{multline*}
  \llangle \gamma(1-p_0),\gamma(1-p_\infty);U(L_\theta)\rrangle_{0,2;\beta}^{QG,\epsilon}:=\\
\sum_{m,d,d_1,d_2}Q^d Q^{d_1}Q^{d_2}
\chi\left(QG^\epsilon_{0,m+2,d}(W\sslash G)_\beta/S_m, \right.
\on{Res}_\mu\mathcal O_{QG^\epsilon_{0,m+2,d}(W\sslash G)}^{\text{vir}} \\
  \otimes 
\left. (U_{d_1,d_2}(L_\theta))^a\on{ev}_1^*(\gamma_{d_1}(1-p_0))\on{ev}_2^*(\gamma_{d_2}(1-p_\infty))\prod_{i=1}^m \on{ev}_{i+2}^*(t)\right)
\end{multline*}
where, $U_{d_1,d_2}(L_\theta)$ is the universal $\mathbb C^*$-equivaraint line bundle obtained from pulling back $\mathcal O(1)$ with the canonical linearization, as described in \cite[Section 3.3]{CK}. This is defined without $\mathbb C^*$-localization, hence has no pole at $q=1$. Applying $\mathbb{C}^*$-localization to this generating series, we have
\[
\llangle \gamma(1-p_0),\gamma(1-p_\infty);U(L_\theta)\rrangle_{0,2;\beta}^{QG,\epsilon}=
\lambda(\mathcal O(\theta);\beta)^a D(S^\epsilon_\beta),
\]
where $\lambda(\mathcal O(\theta);\beta)$ is the character of the torus representation on the fiber of $\mathcal O(\theta)$ at $\beta$. 

For any positive integer $m$, $\Psi^m(D(S^\epsilon_\beta))$ is also defined without $\mathbb C^*$-localization, hence has no pole at $q=1$, where the Adams operation extended from $\Lambda$ by $\Psi^m(q)=q^m$. Therefore, $D(S^\epsilon_\beta)$ has no pole at the $m$-th root of unity.
Hence the Lemma follows.
\end{proof}

\begin{remark}
Since $P^\infty$ and $P^{\infty,\epsilon}$ in Theorem \ref{main-theorem-2} have no pole at roots of unity, the proof also applies to the restrictions of $S^\epsilon_t(q)(P^\epsilon(t,q))$ and $S^\infty_{\tau^{\infty,\epsilon}(t)}(q)(P^{\infty,\epsilon}(\tau^{\infty,\epsilon}(t),q))$ to fixed points, after appropriate adjustments to the generating series.
\end{remark}

\begin{lemma}[Uniqueness Lemma]
Let 
\[
\{S_{1,\beta}\}_{\beta\in (W\sslash G)^T} \quad \text{and} \quad \{S_{2,\beta}\}_{\beta\in (W\sslash G)^T}
\]
be two systems of power series in $\Lambda [[t_i]]\{\{q,1/q\}\}$ that satisfy the following properties:
\begin{description}
\item[(1)]  For all $\beta\in (W\sslash G)^T$, $S_{1,\beta}$ and $S_{2,\beta}$ are rational functions of $q$ with possibly poles only at $0$, $\infty$, roots of unity and at most simple poles at $q=(\lambda(\beta,\mu)^{-1/m})$, where $\lambda(\beta,\mu)$ is the character of the torus representation on the tangent line at the fixed point $\beta$ corresponding to the $1$-dimensional orbit connecting the fixed points $\beta$ and $\mu$, for $m=1,2,\ldots$.

\item[(2)] The systems
\[
\{S_{1,\beta}\}_{\beta\in (W\sslash G)^T} \quad \text{and} \quad \{S_{2,\beta}\}_{\beta\in (W\sslash G)^T}
\]
both satisfy the recursion relation \eqref{recursion-relation}.

\item[(3)] For all $\beta\in (W\sslash G)^T$, the series 
\[
D(S^\epsilon_{1,\beta}):=S^\epsilon_{1,\beta}(Q,t,q)S^\epsilon_{1,\beta}(Q(1/q)^{aL_\theta},t,1/q)
\]
and
\[
D(S^\epsilon_{2,\beta}):=S^\epsilon_{2,\beta}(Q,t,q)S^\epsilon_{2,\beta}(Q(1/q)^{aL_\theta},t,1/q)
\]
have no pole at $q=1$.

\item[(4)] For all $\beta\in (W\sslash G)^T$, 
\[
S_{1,\beta}=S_{2,\beta}+\frac{1}{(1-q)^2}p_{1,2}\left(\frac{q}{1-q}\right),
\]
where $p_{1,2}\left(\frac{q}{1-q}\right)$ stands for a power series in the variable $q/(1-q)$ with coefficient in $K^0(W\sslash G)\otimes \Lambda \{\{t_i\}\}$. 
\item[(5)] For all $\beta\in (W\sslash G)^T$,
\[
S_{1,\beta}=S_{2,\beta} \mod \, Q
\]
\end{description}
Then $S_{1,\beta}=S_{2,\beta}$ for all $\beta\in (W\sslash G)^T$.
\end{lemma}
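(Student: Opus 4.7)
The plan is induction on the total Novikov degree $d$, with the base case $d=0$ supplied by (5). For the inductive step, I assume $R_\beta := S_{1,\beta} - S_{2,\beta} \equiv 0 \pmod{Q^d}$ at every $T$-fixed point $\beta$ and aim to show that $R_d(q)$, the coefficient of $Q^d$ in $R_\beta$, vanishes.

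First I would pin down the structural form of $R_d$. Taking the difference of the two instances of the recursion \eqref{recursion-relation} supplied by (2), the potential simple poles of $R_\beta$ at $q = \lambda(\beta,\mu)^{-1/m}$ are controlled by $\mu^* R$ evaluated at $\lambda(\beta,\mu)^{1/m}$ and weighted by $Q^{m d(\beta,\mu)}$; at Novikov order $Q^d$ this consumes $R$ at strictly lower Novikov order and hence vanishes by the inductive hypothesis. Combined with (1), $R_d$ is a rational function of $q$ whose only finite pole lies at $q=1$. Condition (4) sharpens this to $R_d(q) = (1-q)^{-2} p(q/(1-q))$ where $p$ is a polynomial in $u = q/(1-q)$ (each Novikov coefficient being a rational function of $q$); in particular $R_d$ must have a pole of order at least $2$ at $q=1$ unless $p = 0$.

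Second I would extract a conflicting upper bound from (3). Setting $\Sigma_\beta(q) := [S_{1,\beta}]_{Q^0}(q) = [S_{2,\beta}]_{Q^0}(q)$ (equal by (5)) and using the inductive hypothesis to kill cross terms in $D(S_1) - D(S_2)$, one computes
\[
[D(S_1)-D(S_2)]_{Q^d} = R_d(q)\Sigma_\beta(1/q) + \Sigma_\beta(q) R_d(1/q)(1/q)^{a d(L_\theta)},
\]
which by (3) has no pole at $q=1$ for every $a \in \mathbb Z$. Subtracting the $a=0$ case from the general case, $\Sigma_\beta(q) R_d(1/q)\bigl((1/q)^{a d(L_\theta)}-1\bigr)$ has no pole at $q=1$ for any $a$; since $(1/q)^{a d(L_\theta)} - 1$ has a simple zero at $q=1$ for $a \neq 0$, this forces $\Sigma_\beta(q) R_d(1/q)$ to have at most a simple pole at $q=1$, and then the $a=0$ case forces $R_d(q)\Sigma_\beta(1/q)$ to have at most a simple pole as well. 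Using that $\Sigma_\beta = \mathds 1 + O(Q,t)$ is a unit in the $(Q,t)$-adic topology, this upper bound descends to $R_d$ itself.

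The lower and upper bounds (pole order $\geq 2$ versus pole order $\leq 1$) together force $p = 0$, so $R_d = 0$, closing the induction. The step I expect to be most delicate is the second one, where the $(Q,t)$-adic invertibility of $\Sigma_\beta$ near $q = 1$ must be carefully leveraged to descend the pole bound on $R_d(q)\Sigma_\beta(1/q)$ to $R_d$ itself, in light of the mild poles $\Sigma_\beta$ may have at $q = 1$ coming from the $S_n$-equivariant trace of the cotangent line.
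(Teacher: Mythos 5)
Your overall strategy coincides with the paper's: induct on degree, use the recursion of (2) together with the inductive hypothesis to kill the simple poles at $q=\lambda(\beta,\mu)^{-1/m}$, use (4) to force a pole of order at least $2$ at $q=1$ on any nonzero piece of the difference $R=S_{1,\beta}-S_{2,\beta}$, and play this off against a pole bound at $q=1$ extracted from (3). Your way of extracting that bound --- subtracting the $a=0$ instance of (3) from the general one so as to first control $\Sigma_\beta(q)R_d(1/q)$ and then $R_d(q)\Sigma_\beta(1/q)$ --- is a clean variant of the paper's computation, which instead examines the leading Laurent coefficient $A\,(q^{n-2}+(-1)^nq^2)$ of the $\xi=1$ part at $q=1$.

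The one step that does not work as written is the final descent: ``$\Sigma_\beta=\mathds 1+O(Q,t)$ is a unit in the $(Q,t)$-adic topology, so the upper bound descends to $R_d$ itself.'' Adic invertibility says nothing about pole orders at $q=1$: the positive $t$-degree coefficients of $\Sigma_\beta^{-1}$ are rational functions of $q$ which, as you yourself note, may have poles at roots of unity, so multiplying $R_d(q)\Sigma_\beta(1/q)$ by $\Sigma_\beta(1/q)^{-1}$ can raise the pole order of a fixed $t$-coefficient of $R_d$ arbitrarily; a bound of ``at most a simple pole'' on the product therefore does not transfer to $R_d$. The repair is to refine the induction so that the spectator factor is literally $1$ rather than $\Sigma_\beta$: either pass to the minimal $t$-degree $k_0$ at which $R_d$ is nonzero, where the $t^{k_0}$-coefficient of $R_d(q)\Sigma_\beta(1/q)$ equals $[R_d]_{k_0}(q)\cdot 1$ and the contradiction with (4) is immediate, or, as the paper does, induct lexicographically on the bi-degree $\left(\sum_ik_i,\,d(L_\theta)\right)$, which makes the only surviving cross term the bi-degree-$(0,0)$ coefficient $1$ from the start. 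With that modification your argument closes; without it the last inference is a genuine gap.
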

\begin{proof}
We write
\[
S_{1,\beta}=\sum_d Q^d\sum_{k}c_{1,\beta,d,k}(q)\prod_i t_i^{k_i} \quad \text{and} \quad
S_{2,\beta}=\sum_d Q^d\sum_{k}c_{2,\beta,d,k}(q)\prod_i t_i^{k_i}
\]
where $k:=\{k_i\}_i$ and $k_i$ are nonnegative integers. We define the bi-degree of the monomial $Q^d\prod_i t_i^{k_i}$ to be 
$\left( \sum_i k_i,d(L_\theta)\right)$. We write $S_{1,\beta}^{(m,l)}$ and $S_{2,\beta}^{(m,l)}$ for the part of bi-degree $(m,l)$ of $S_{1,\beta}$ and $S_{2,\beta}$, respectively. It suffices to show
\[
S_{1,\beta}^{(m,l)}=S_{2,\beta}^{(m,l)},\quad \text{for all }\beta\in (W\sslash G)^T \text{ and } (m,l)\in \mathbb N\times \mathbb N
\]
We prove it by induction on $(m,l)$ using the lexicographic order
\[
(m^\prime,l^\prime)<(m,l) \text{ if and only if } m^\prime <m, \text{ or } m^\prime =m \text{ and } l^\prime <l.
\]
The base case when $d=0$ is true due to property (5). 

For $l\geq 1$, we assume 
\[
S_{1,\beta}^{(m^\prime,l^\prime)}=S_{2,\beta}^{(m^\prime,l^\prime)} \text{ for all } \beta\in (W\sslash G)^T \text{ and all } (m^\prime,l^\prime)<(m,l)
\]
Denote by $D^{(m,l)}$ the part of bi-degree (m,l) of the difference $D(S_{1,\beta})-D(S_{2,\beta})$. By induction, we have
\[
D^{(m,l)}=S_{1,\beta}^{(m,l)}(q)-S_{2,\beta}^{(m,l)}(q)+ (1/q)^{al}(S_{1,\beta}^{(m,l)}(1/q)-S_{2,\beta}^{(m,l)}(1/q)).
\]
By properties (2), $\left( S_{1,\beta}^{(m,l)}(q)-S_{2,\beta}^{(m,l)}(q)\right)$ is  the sum of monomials of the form $c_i(\xi q)^i/(1-\xi q)^{i+1}$, for $i\geq 0$ and roots of unity $\xi$, with coefficient in $K^0(W\sslash G)\otimes \Lambda\{\{t_i\}\}$, we write
\[
\left(S_{1,\beta}^{(m,l)}(q)-S_{2,\beta}^{(m,l)}(q)\right)_1
\] 
for the sum of terms of $S_{1,\beta}^{(m,l)}(q)-S_{2,\beta}^{(m,l)}(q)$ with $\xi=1$,
therefore, by property (4), we have
\[
\left(S_{1,\beta}^{(m,l)}(q)-S_{2,\beta}^{(m,l)}(q)\right)_1=\left(\frac{1}{1-q}\right)^n(Aq^{n-2}+O(1-q))
\]
and
\[
\left(S_{1,\beta}^{(m,l)}(1/q)-S_{2,\beta}^{(m,l)}(1/q)\right)_1=\left(\frac{1}{1-q}\right)^n((-1)^nAq^2+O(1-q)),
\]
for an integer $n\geq 2$ and a nonzero element $A\in \Lambda\{\{t_i\}\}$. Moreover, we have
\[
(1/q)^{al}=1+al(1-q)+O(1-q),
\]
therefore
\[
\left(D^{(m,l)}\right)_1=\left(\frac{1}{1-q}\right)^n(Aq^{n-2}+(-1)^nAq^2+O(1-q))
\]

For $n>1$, then $D^{(m,l)}$ has a pole at $q=1$. It contradicts property (3). Therefore, $S_{1,\beta}^{(m,l)}-S_{2,\beta}^{(m,l)}$ has no pole at $q=1$. Similar argument shows $S_{1,\beta}^{(m,l)}-S_{2,\beta}^{(m,l)}$ has no pole at roots of unity. Therefore, $S_{1,\beta}^{(m,l)}=S_{2,\beta}^{(m,l)}$.
\end{proof}

Theorem \ref{main-theorem-1} now follows from the above uniqueness lemma applied to $\{S^{\epsilon_1}_{\tau_{\gamma}^{\epsilon_1,\epsilon_2}(t), \beta}\}$ and $\{S^{\epsilon_2}_{t, \beta}\}$. The required properties are checked before. Property (1) is in Lemma \ref{lem:pole}. Property (2) is in Lemma \ref{lem:rec}. Property (3) is in Lemma \ref{lem:poly}. Property (4) is in Lemma \ref{lem:difference}. Property (5) is clear.

\end{document}